\newtheorem{thm}{Theorem}[section]
\newtheorem{cor}[thm]{Corollary}
\newtheorem{lem}[thm]{Lemma}
\newtheorem{prop}[thm]{Proposition}
\theoremstyle{definition}
\newtheorem{defn}[thm]{Definition}
\theoremstyle{remark}
\newtheorem{rem}[thm]{Remark}
\numberwithin{equation}{section}
\def\cb{\mathcal{B}}
\def\bb{{\mathbb B}}
\def\bc{{\mathbb C}}
\def\bm{{\mathbb M}}
\def\bn{{\mathbb N}}
\def\br{{\mathbb R}}
\def\a{\alpha}
\def\b{\beta}
\def\g{\gamma}  
\def\d{\delta}  \def\D{\Delta}
\def\k{\kappa}
\def\p{\psi}
\def\n{\nu}
\def\s{\sigma} 
\def\t{\tau}
\def\f{\varphi}
\def\tr{\mathop{\rm Tr}}
\def\id{{\bf 1}\!\!{\rm I}}
\def\fb{{\mathbf{f}}}
\def\ab{{\mathbf{a}}}
\def\cb{{\mathbf{c}}}
\def\pb{{\mathbf{p}}}
\def\xb{{\mathbf{x}}}
\def\bx{{\mathbf{b}}}
\def\qb{{\mathbf{q}}}
\def\db{{\mathbf{d}}}
\def\D{\Delta}
\def\wb{{\mathbf{w}}}
\def\o{\otimes}
\def\a{\alpha}
\begin{document}
\title[Quantum quadratic operators]
{On quantum quadratic operators of $\bm_2(\mathbb{C})$ and their
dynamics}

\author{Farrukh Mukhamedov}
\address{Farrukh Mukhamedov\\
 Department of Computational \& Theoretical Sciences\\
Faculty of Science, International Islamic University Malaysia\\
P.O. Box, 141, 25710, Kuantan\\
Pahang, Malaysia} \email{{\tt far75m@yandex.ru}, {\tt
farrukh\_m@iiu.edu.my}}

\author{Hasan Ak\i n}
\address{Hasan Ak\i n, Department of Mathematics, Faculty of Education,
 Zirve University, Kizilhisar Campus, Gaziantep, 27260, Turkey}
\email{{\tt hasanakin69@gmail.com}}

\author{Seyit Temir}
\address{Seyit Temir, Department of Mathematics, Arts and
Science Faculty, Harran University, \c{S}anliurfa, 63120, Turkey}
\email{{\tt temirseyit@harran.edu.tr}}

\author{Abduaziz Abduganiev}
\address{Abduaziz Abduganiev\\
 Department of Computational \& Theoretical Sciences\\
Faculty of Science, International Islamic University Malaysia\\
P.O. Box, 141, 25710, Kuantan\\
Pahang, Malaysia} \email{{\tt azizi85@yandex.ru}}
\begin{abstract}

In the present paper we study nonlinear dynamics of quantum
quadratic operators (q.q.o) acting on the algebra of $2\times 2$
matrices $\bm_2(\bc)$. First, we describe q.q.o.  with Haar state as
well as quadratic operators with the Kadison-Schwarz property. By
means of such a description we provide an example of q.q.o. which is
not the Kadision-Schwarz operator. Then we study stability of
dynamics of q.q.o.

 \vskip 0.3cm \noindent {\it Mathematics Subject
Classification}: 46L35, 46L55, 46A37.
60J99.\\
{\it Key words}: quantum quadratic operators; quadratic operator;
Kadison-Schwarz operator; nonlinear dynamics; stability.
\end{abstract}

\maketitle

\section{Introduction}

It is known that there are many systems which are described by
nonlinear operators. One of the simplest nonlinear case is quadratic
one. Quadratic dynamical systems have been proved to be a rich
source of analysis for the investigation of dynamical properties and
modeling in different domains, such as population dynamics
\cite{Be,FG,HJ}, physics \cite{PL,UR}, economy \cite{D}, mathematics
\cite{HS,L1,V,U}. The problem of studying the behavior of
trajectories of quadratic stochastic operators was stated in
\cite{U}. The limit behavior and ergodic properties of trajectories
of such operators were studied in \cite{K,L1,L2,Ma,V}. However, such
kind of operators do not cover the case of quantum systems.
Therefore, in \cite{GM1,GM2} quantum quadratic operators acting on a
von Neumann algebra were defined and studied. Certain ergodic
properties of such operators were studied in \cite{M2,M3}. In those
papers basically dynamics of quadratic operators were defined
according to some recurrent rule (an analog of Kolmogorov-Chapman
equation) which makes a possibility to study asymptotic behaviors of
such operators. However, with a given quadratic operator one can
define also a non-linear operator whose dynamics (in non-commutative
setting) is not well studied yet. Note that in \cite{MM0} another
construction of nonlinear quantum maps were suggested and some
physical explanations of such nonlinear quantum dynamics were
discussed. There, it was also indicated certain applications to
quantum chaos. On the other hand, very recently, in \cite{FS}
convergence of ergodic averages associated with mentioned non-linear
operator are studied by means of absolute contractions of von
Neumann algebras. Actually, it is not investigated nonlinear
dynamics of convolution operators. Therefore, a complete analysis of
dynamics of quantum quadratic operator is not well studied.

In the present paper we are going to study nonlinear dynamics of
quantum quadratic operators (q.q.o.) acting on the algebra of
$2\times 2$ matrices $\bm_2(\bc)$. Since positive, trace-preserving
maps arise naturally in quantum information theory (see e.g.
\cite{N}) and other situations in which one wishes to restrict
attention to a quantum system that should properly be considered a
subsystem of a larger system with which it interacts. Therefore,
after preliminaries (Sec. 2) in section 3, we describe quadratic
operators with Haar state (invariant with respect to trace), namely
certain characterizations of q.q.o, Kadison-Schwarz operators
\footnote{In the literature the most tractable maps, the completely
positive ones, have proved to be of great importance in the
structure theory of C$^*$-algebras. However, general positive
(order-preserving) linear maps are very
intractable\cite{Kos,MM1,Ma,St}. It is therefore of interest to
study conditions stronger than positivity, but weaker than complete
positivity. Such a condition is called {\it Kadison-Schwarz
property}, i.e a map $\phi$ satisfies the Kadison-Schwarz property
if $\phi(a)^*\phi(a)\leq \phi(a^*a)$ holds for every $a$. Note that
every unital completely positive map satisfies this inequality, and
a famous result of Kadison states that any positive unital map
satisfies the inequality for self-adjoint elements $a$. In
\cite{Rob} relations between $n$-positivity of a map $\phi$ and the
Kadison-Schwarz property of certain map is established.}, which are
invariant w.r.t. trace, are given. By means of such a description in
Section 4, we shall provide an example of positive q.q.o. which is
not a Kadision-Schwarz operator. On the other hand, the such a
characterization is related to the separability condition, which
plays an important role in quantum information. It is worth to
mention that similar characterizations of positive maps defined on
$\bm_2(\bc)$ were considered in \cite{MM1,MM2}. Further, in section
4 we study stability of dynamics of quadratic operators. Note that
in \cite{M5} we have studied very simple dynamics of quadratic
operators. Moreover, we note that the considered quadratic operators
are related to quantum groups introduced in \cite{W}. Certain class
of quantum groups on $\bm_2(\bc)$ were investigated in \cite{S}.

\section{Preliminaries}

In what follows, by $\bm_2(\bc)$ we denote an algebra of $2\times 2$
matrices over complex field $\bc$. By $\bm_2(\bc)\o \bm_2(\bc)$ we
mean tensor product of $\bm_2(\bc)$ into itself. We note that such a
product can be considered as an algebra of $4\times 4$ matrices
$\bm_4(\bc)$ over $\bc$. In the sequel $\id$ means an identity
matrix, i.e. $ \id = \left( \begin{array}{cc} 1 & 0 \\ 0 & 1
\end{array} \right) $. By $S(\bm_2(\bc))$ we denote the set of all
states (i.e. linear positive functionals which take value 1 at
$\id$) defined on $\bm_2(\bc)$.

\begin{defn}\label{qqso} A linear operator $\D:\bm_2(\bc)\to \bm_2(\bc)\o\bm_2(\bc)$ is said to be
\begin{enumerate}
\item[(a)] -- a {\it quantum quadratic operator (q.q.o.)} if it
is unital (i.e. $\D\id=\id\o\id$) and positive ( i.e. $\D x\geq 0$
whenever $x\geq 0$);
\item[(b)] -- a {\it quantum convolution} if it is a q.q.o. and
satisfies coassociativity condition:
$$(\D\o
id)\circ\D=(id\o\D)\circ\D,$$ where $id$ is the identity operator of
$\bm_2(\bc)$;
\item[(c)] -- a {\it Kadison-Schwarz operator (KS)} if it satisfies
\begin{equation}\label{KS}
\D(x^*x)\geq\D(x)^*\D(x) \ \ \textrm{for all} \ x\in\bm_2(\bc) .
\end{equation}
\end{enumerate}
\end{defn}

One can see that if $\D$ is unital and KS operator, then it is a
q.q.o. A state  $h\in S(\bm_2(\bc))$ is called {\it a Haar state}
for a q.q.o. $\D$ if for every $x\in\bm_2(\bc)$ one has
\begin{equation}\label{Haar}
(h\o id)\circ \D(x)=(id\o h)\circ\D(x)=h(x)\id.
\end{equation}

\begin{rem}\label{qg} Note that if a quantum convolution $\D$ on
$\bm_2(\bc)$ becomes a $*$-homomorphic map with a condition
$$
\overline{\textrm{Lin}}((\id\o
\bm_2(\bc))\D(\bm_2(\bc)))=\overline{\textrm{Lin}}((\bm_2(\bc)\o\id)\D(\bm_2(\bc)))=\bm_2(\bc)\o\bm_2(\bc)
$$
then a pair $(\bm_2(\bc),\D)$ is called a {\it compact quantum
group} \cite{W,S}. It is known \cite{W} that for given any compact
quantum group there exists a unique Haar state w.r.t. $\D$.
\end{rem}

\begin{rem} Let $U:\bm_2(\bc)\o\bm_2(\bc)\to \bm_2(\bc)\o\bm_2(\bc)$
be a linear operator such that $U(x\o y)=y\o x$ for all $x,y\in
\bm_2(\bc)$. If a q.q.o. $\D$ satisfies $U\D=\D$, then $\D$ is
called a {\it quantum quadratic stochastic operator}. Such a kind of
operators were studied and investigated in \cite{M2,M5}.
\end{rem}

\begin{rem} We note that there is another approach to nonlinear quantum operators on $C^*$-algebras (see \cite{MM0}).
\end{rem}

Each q.q.o. $\D$ defines a conjugate operator $\D^*:(\bm_2(\bc)\o\bm_2(\bc))^*\rightarrow \bm_2(\bc)^*$  by
\begin{equation}\label{cqo}
\D^*(f)(x)=f(\D x), \ \ f\in (\bm_2(\bc)\o \bm_2(\bc))^*, \ x\in \bm_2(\bc).
\end{equation}

One can define an operator $V_\D$  by
\begin{equation}\label{qo} V_\D(\f)=\D^*(\f\o\f), \ \
\f\in S(\bm_2(\bc)),
\end{equation}
which is called a {\it quadratic operator (q.o.)}.  Note that
unitality and positivity of $\D$ imply that the operator $V_\D$ maps
$S(\bm_2(\bc))$ into itself. In some literature operator $V_\D$ is
called {\it quadratic convolution} (see for example \cite{FS}).

\section{Quantum quadratic operators on $\bm_2(\bc)$}

In this section we are going to describe quantum quadratic operators
on $\bm_2(\bc)$ as well as find necessary conditions for such
operators to satisfy the Kadison-Schwarz property.

Recall \cite{BR} that the identity and Pauli matrices $\{ \id,
\sigma_1, \sigma_2, \sigma_3 \}$ form a basis for $\bm_2(\bc)$,
where
\begin{eqnarray*}
\sigma_1 = \left( \begin{array}{cc} 0 & 1 \\ 1 & 0 \end{array}
\right)~~ \sigma_2 = \left( \begin{array}{cc} 0 & -i \\ i & 0
\end{array} \right)~~ \sigma_3 = \left( \begin{array}{cc} 1 & 0 \\
0 & -1 \end{array} \right).
\end{eqnarray*}

In this basis every matrix $x\in\bm_2(\bc)$ can  be written as $x =
w_0\id + \wb{\bf \sigma}$ with $w_0\in\bc$, $\wb =(w_1,w_2,w_3)\in
\bc^3$, here $\wb\s=w_1\s_1+w_2\s_2+w_3\s_3$. In what follows, we
frequently use notation
$\overline{\wb}=(\overline{w_1},\overline{w_2},\overline{w_3})$.

\begin{lem}\label{m2}\cite{RSW} The following assertions hold true:
\begin{enumerate}
\item[(a)] $x$ is self-adjoint iff  $w_0,\wb$  are reals; \item[(b)]
$\tr(x) = 1$ iff $w_0 =0.5$, here $\tr$ is the trace of a matrix
$x$;
\item[(c)] $x
> 0$ iff $\|\wb\|\leq w_0$, where
$\|\wb\|=\sqrt{|w_1|^2+|w_2|^2+|w_3|^2}$;
\item[(d)] A linear functional $\f$ on $\bm_2(\bc)$ is a state iff it can be represented
by
\begin{equation}\label{state}
{\f}(w_0\id + \wb\sigma)=w_0+\langle\wb,{\mathbf{f}}\rangle, \ \
\end{equation}
where ${\mathbf{f}}=(f_1,f_2,f_3)\in\br^3$ such that
$\|{\mathbf{f}}\|\leq 1$. Here as before $\langle\cdot,\cdot\rangle$
stands for the scalar product in $\bc^3$.
\end{enumerate}
\end{lem}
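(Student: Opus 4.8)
The plan is to work throughout with the explicit matrix form obtained by expanding $x = w_0\id + \wb\s$, namely
\begin{equation*}
x = \begin{pmatrix} w_0 + w_3 & w_1 - iw_2 \\ w_1 + iw_2 & w_0 - w_3 \end{pmatrix},
\end{equation*}
and to exploit that $\{\id,\s_1,\s_2,\s_3\}$ is a \emph{basis} consisting of self-adjoint matrices. For (a), since each basis element equals its own adjoint, taking the adjoint merely conjugates the coordinates, $x^* = \overline{w_0}\,\id + \overline{\wb}\,\s$; by linear independence $x=x^*$ forces $w_0=\overline{w_0}$ and $\wb=\overline{\wb}$, i.e.\ all coordinates real. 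For (b), linearity of the trace together with $\tr\id = 2$ and $\tr\s_i = 0$ gives $\tr x = 2w_0$, so $\tr x = 1$ iff $w_0 = 1/2$.

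For (c) I would first recall that $x\geq 0$ in a C$^*$-algebra forces $x$ self-adjoint, so by (a) we may take $w_0,\wb$ real. A Hermitian $2\times 2$ matrix is positive exactly when both of its eigenvalues are nonnegative, and these are the roots of $\lambda^2-(\tr x)\lambda+\det x=0$. Here $\tr x = 2w_0$, and a direct computation gives $\det x = w_0^2-\|\wb\|^2$, whence the eigenvalues are $\lambda_\pm = w_0\pm\|\wb\|$. Positivity is then equivalent to $\lambda_- = w_0-\|\wb\|\geq 0$, i.e.\ $\|\wb\|\leq w_0$ (which in particular forces $w_0\geq 0$).

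Part (d) is the substantive step. Given a state $\f$, I would use that a positive unital functional is Hermitian, hence real-valued on self-adjoint elements; setting $f_i:=\f(\s_i)$ makes $\fb=(f_1,f_2,f_3)$ real, and linearity with $\f(\id)=1$ yields $\f(w_0\id+\wb\s)=w_0+\langle\wb,\fb\rangle$. It remains to show positivity of $\f$ is equivalent to $\|\fb\|\leq 1$. By (c) it suffices to test $\f$ on self-adjoint $x\geq 0$, i.e.\ on real $w_0,\wb$ with $\|\wb\|\leq w_0$, where positivity reads $w_0+\langle\wb,\fb\rangle\geq 0$. Cauchy--Schwarz gives $\langle\wb,\fb\rangle\geq -\|\wb\|\,\|\fb\|\geq -w_0\|\fb\|$, so $\|\fb\|\leq 1$ is sufficient; conversely, when $\fb\neq 0$ the choice $\wb=-w_0\,\fb/\|\fb\|$ produces a positive $x$ with $\f(x)=w_0(1-\|\fb\|)$, which is negative once $\|\fb\|>1$, proving necessity.

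The main obstacle I anticipate is the bookkeeping in (d): correctly \emph{deducing} reality of the $f_i$ from positivity rather than assuming it, and matching the Cauchy--Schwarz optimization to the exact constraint $\|\wb\|\leq w_0$ in both directions so that the sharp bound $\|\fb\|\leq 1$ emerges. The reduction to self-adjoint test elements furnished by (c) is precisely what renders this optimization one-dimensional and transparent; everything else is routine linear algebra.
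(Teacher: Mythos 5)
Your proof is correct. Note, however, that the paper offers no proof of this lemma at all --- it is quoted verbatim from the reference [RSW] (Ruskai--Szarek--Werner) --- so there is nothing internal to compare against; your argument (adjoint conjugates coordinates in the self-adjoint Pauli basis, eigenvalues $w_0\pm\|\wb\|$ from trace and determinant, and the Cauchy--Schwarz optimization over the cone $\|\wb\|\le w_0$ for part (d)) is the standard derivation and fills that gap correctly. The only cosmetic point is that the lemma's ``$x>0$'' should be read as $x\ge 0$ to match the non-strict inequality $\|\wb\|\le w_0$, which is how you (and the paper, in its later applications) implicitly interpret it.
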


In the sequel we shall identify a state with a vector $\fb\in
\br^3$. By $\t$ we denote a normalized trace, i.e.
$$
\tau \left(%
\begin{array}{cc}
  x_{11}& x_{12} \\
  x_{21} & x_{22} \\
\end{array}%
\right)=\frac{x_{11}+x_{22}}{2},
$$
i.e. $\t(x)=\frac{1}{2}\tr(x)$, $x\in \bm_2(\bc)$,

Let  $\Delta:\bm_2(\bc)\rightarrow \bm_2(\bc) \otimes \bm_2(\bc)$ be
a q.q.o. Then we write the operator $\Delta $ in terms of a basis in
$\bm_2(\bc)\o\bm_2(\bc)$ formed by the Pauli matrices. Namely,
\begin{eqnarray}\label{D-bij}
&& \Delta \id=\id\otimes \id; \nonumber\\
&& \Delta (\sigma_i)=b_i(\id\otimes \id)+\overset{3}{\underset{j=1}{\sum
    }}b_{ji}^{(1)}(\id\otimes \sigma_j)+\overset{3}{\underset{j=1}{\sum
    }}b_{ji}^{(2)}(\sigma_j \otimes \id)+\overset{3}{\underset{m,l=1}{\sum
    }}b_{ml,i}(\sigma_m \otimes \sigma_l),
\end{eqnarray}
where $i=1,2,3$.

In general, a description of positive operators is one of the main
problems of quantum information. In the literature most tractable
maps are positive and trace-preserving ones, since such maps arise
naturally in quantum information theory (see \cite{N}).  Therefore, in the sequel we shall
restrict ourselves to q.q.o. which has a Haar state $\t$. So, we
would like to describe all such kind of maps.

\begin{prop}\label{trace-pre}
Let $\D:\bm_2(\bc)\to \bm_2(\bc)\o\bm_2(\bc) $ be a q.q.o. with a
Haar state $\t$, then in \eqref{D-bij} one has $b_j=0$, $b^{(1)}_{ij}=b^{(2)}_{ij}=0$ and
$b_{ij,k}$ are real for every $i,j,k\in\{1,2,3\}$. Moreover, $\D$
has the following form:
\begin{equation}\label{D3}
\D(x)=w_0\id\otimes\id+\sum_{m,l=1}^3\langle\bx_{ml},\overline{\wb}\rangle\sigma_m\otimes\sigma_l,
\end{equation}
where $x=w_0+\wb\s$, $\bx_{ml}=(b_{ml,1},b_{ml,2},b_{ml,3})$. Here
as before $\langle\cdot,\cdot\rangle $ stands for the standard
scalar product in $\bc^3$.
\end{prop}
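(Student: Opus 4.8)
The plan is to extract everything from two inputs: the Haar property \eqref{Haar} specialized to $h=\tau$, and positivity of $\D$. The only facts about $\tau$ needed are $\tau(\id)=1$ and $\tau(\sigma_j)=0$ for $j=1,2,3$ (the Pauli matrices are traceless). Throughout I use that $\{\id,\sigma_1,\sigma_2,\sigma_3\}$ is a basis of $\bm_2(\bc)$, so that $\{\id\o\id,\ \id\o\sigma_j,\ \sigma_j\o\id,\ \sigma_m\o\sigma_l\}$ is a basis of $\bm_2(\bc)\o\bm_2(\bc)$ and matching coefficients is legitimate.

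First I would apply the slice map $\tau\o id$ to \eqref{D-bij} evaluated at $x=\sigma_i$. Since $\tau$ kills the Pauli matrices and fixes $\id$, the $\sigma_j\o\id$ and $\sigma_m\o\sigma_l$ terms vanish and one is left with $(\tau\o id)\D(\sigma_i)=b_i\id+\sum_j b^{(1)}_{ji}\sigma_j$. By \eqref{Haar} this equals $\tau(\sigma_i)\id=0$, so comparing coefficients in the basis $\{\id,\sigma_j\}$ of $\bm_2(\bc)$ forces $b_i=0$ and $b^{(1)}_{ji}=0$ for all $i,j$. Applying instead $id\o\tau$ and using the same cancellations gives $(id\o\tau)\D(\sigma_i)=b_i\id+\sum_j b^{(2)}_{ji}\sigma_j=0$, whence $b^{(2)}_{ji}=0$. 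Thus the Haar condition alone (no positivity yet) reduces \eqref{D-bij} to $\D(\sigma_i)=\sum_{m,l}b_{ml,i}\,\sigma_m\o\sigma_l$.

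Next I would invoke positivity to get reality of the surviving coefficients. A positive linear map between $C^*$-algebras is automatically Hermitian-preserving, $\D(x^*)=\D(x)^*$; since each $\sigma_i$ is self-adjoint, so is $\D(\sigma_i)$. As every $\sigma_m\o\sigma_l$ is self-adjoint and these are linearly independent, equating $\D(\sigma_i)$ with $\D(\sigma_i)^*=\sum_{m,l}\overline{b_{ml,i}}\,\sigma_m\o\sigma_l$ forces $b_{ml,i}=\overline{b_{ml,i}}$, i.e. all $b_{ml,i}$ are real.

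Finally, \eqref{D3} follows by linearity: writing $x=w_0\id+\sum_i w_i\sigma_i$ and combining $\D\id=\id\o\id$ with the computed $\D(\sigma_i)$ gives $\D(x)=w_0\,\id\o\id+\sum_{m,l}\big(\sum_i b_{ml,i}w_i\big)\sigma_m\o\sigma_l$, and since the $b_{ml,i}$ are real one has $\sum_i b_{ml,i}w_i=\langle\bx_{ml},\overline{\wb}\rangle$ with $\langle\cdot,\cdot\rangle$ conjugate-linear in the second slot. I expect no serious obstacle: the only points needing care are the bookkeeping in the slice-map computation and the (standard but easy to overlook) fact that plain positivity, rather than complete positivity, already yields the Hermitian-preserving property that delivers reality of the $b_{ml,i}$.
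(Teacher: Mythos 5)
Your proposal is correct and follows essentially the same route as the paper's own proof: slice maps $\tau\o id$ and $id\o\tau$ applied via the Haar condition to kill $b_i$, $b^{(1)}_{ji}$, $b^{(2)}_{ji}$, plus the Hermiticity-preserving property of positive maps to get reality of the $b_{ml,i}$ (the paper merely orders these two steps the other way and extracts $b_i=0$ from $\tau\o\tau$ rather than from a single slice, which is an immaterial variation). No gaps.
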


\begin{proof}
From the positivity of $\Delta$ we get that $\Delta x^{*}=(\Delta
x)^{*}$, therefore
\begin{eqnarray*}
\Delta (\sigma_i^{*})&=&\overline{b_i}(\id\otimes
\id)+\overset{3}{\underset{j=1}{\sum
    }}\overline{b_{ji}^{(1)}}(\id\otimes \sigma_j)+\overset{3}{\underset{j=1}{\sum
    }}\overline{b_{ji}^{(2)}}(\sigma_j \otimes \id)+\overset{3}{\underset{m,l=1}{\sum
    }}\overline{b_{ml,i}}(\sigma_m \otimes \sigma_l).
\end{eqnarray*}

This yields that $b_i=\overline{b_i}$,
$b_{ji}^{(k)}=\overline{b_{ji}^{(k)}}$ ($k=1,2$) and
$b_{ml,i}=\overline{b_{ml,i}}$, i.e. all coefficients are real
numbers.

From \eqref{Haar} one finds
$$ \tau\otimes \tau(\Delta
x)=\tau(\tau\otimes id)(\Delta(x))=\tau(x)\t(\id)=\t(x), \ \ \ x\in
\bm_2(\bc),
$$
which means that $\t$ is an invariant state for $\D$. Hence, we
have $\t\o\t(\D(\s_i))=\t(\s_i)=0$ which yields $b_j=0$, $j=1,2,3$.

Again using the equality \eqref{Haar} with $h=\t$, one gets
\begin{eqnarray*} (id \otimes
\tau)\Delta(\sigma_i)&=&(id \otimes
\tau)\bigg[\overset{3}{\underset{j=1}{\sum}}\big(b_{ji}^{(1)}(\id\otimes
\sigma_j)+ b_{ji}^{(2)}(\sigma_j \otimes
\id)\big)+\overset{3}{\underset{m,l=1}{\sum}}b_{ml,i}(\sigma_m \otimes \sigma_l)\bigg]\\
&=& \overset{3}{\underset{j=1}{\sum}}b_{ji}^{(2)}\sigma_j=
\tau(\sigma_j)\id=0.
\end{eqnarray*}
Therefore, $b_{ji}^{(2)}=0$, for all $i,j=1,2,3.$ Similarly, one
finds
$$ (\tau \otimes
id)\Delta(\sigma_j)=\overset{3}{\underset{j=1}{\sum}}b_{ji}^{(1)}\sigma_j=
\tau(\sigma_j)\id,
$$
which means $b_{ji}^{(1)}=0$. Hence, $\D$ has the following form
\begin{equation}\label{d11}
\Delta(\sigma_i)=\overset{3}{\underset{m,l=1}{\sum
    }}b_{ml,i}(\sigma_m\otimes\sigma_l), \ \ i=1,2,3.
\end{equation}

Denoting
\begin{equation}\label{b-ml}
\bx_{ml}=(b_{ml,1},b_{ml,2},b_{ml,3}).
\end{equation}
and taking any $x=w_0\id+\wb\sigma\in \bm_2(\bc)$, from \eqref{d11}
we immediately find \eqref{D3}. This completes the proof.
\end{proof}

Let us turn to the positivity of $\D$. Given a vector
$\fb=(f_1,f_2,f_3)\in \br^3$ put
\begin{equation}\label{bij}
\b(\fb)_{ij}=\sum_{k=1}^3b_{ki,j}f_k.
\end{equation}
Define a matrix $\bb(\fb)=(\b(\fb)_{ij})_{ij=1}^3$, and by
$\|\bb(\fb)\|$ we denote its norm associated with Euclidean norm in
$\br^3$.

Given a state $\f$ by $E_\f$ we denote the canonical
conditional expectation defined by $E_{\varphi}(x\otimes
y)=\varphi(x)y$, where $x,y\in\bm_2(\bc)$.

In the sequel by $S$ we denote the unit ball in $\br^3$, i.e.
$$
S=\{\pb=(p_1,p_2,p_3)\in\br^3: \ p_1^2+p_2^2+p_3^2\leq 1\}.
$$
Let us denote
$$
\||\bb\||=\sup_{\fb\in S}\|\bb(\fb)\|. $$
\begin{prop}\label{positive}
Let $\Delta$ be a q.q.o. with a Haar state $\t$, then $\||\bb\||\leq
1$.
\end{prop}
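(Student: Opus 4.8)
The plan is to exploit the positivity of $\D$ by slicing off the first tensor factor with a state, thereby reducing to the already-established positivity criterion of Lemma \ref{m2}. Concretely, given a vector $\fb\in S$, let $\f$ be the corresponding state via Lemma \ref{m2}(d), so that $\f(\s_m)=f_m$. The canonical conditional expectation $E_\f$ is a positive map, hence the composition $E_\f\circ\D:\bm_2(\bc)\to\bm_2(\bc)$ is again a positive (and self-adjointness-preserving) map. My idea is to compute this composition explicitly on a self-adjoint element $x$ and then read off a contraction property of $\bb(\fb)$ directly from Lemma \ref{m2}(c).

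First I would fix $x=w_0\id+\wb\s$ with $w_0,\wb$ real and apply $E_\f$ to the form \eqref{D3} supplied by Proposition \ref{trace-pre}. Since $E_\f(\s_m\o\s_l)=\f(\s_m)\s_l=f_m\s_l$ and $E_\f(\id\o\id)=\id$, I expect
$$E_\f(\D(x))=w_0\id+\sum_{l=1}^3\Big(\sum_{i=1}^3\Big(\sum_{m=1}^3 b_{ml,i}f_m\Big)w_i\Big)\s_l.$$
Comparing the innermost sum with the definition \eqref{bij}, namely $\b(\fb)_{li}=\sum_{m}b_{ml,i}f_m$, this collapses to
$$E_\f(\D(x))=w_0\id+(\bb(\fb)\wb)\s.$$
In other words, slicing $\D$ by the state $\f$ turns it into the linear action $\wb\mapsto\bb(\fb)\wb$ on the ``direction'' vector, while leaving the scalar part $w_0$ fixed.

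Finally I would invoke positivity. Because $E_\f\circ\D$ is positive and preserves self-adjoint elements, for every $x\ge 0$ its image $w_0\id+(\bb(\fb)\wb)\s$ is a positive element of $\bm_2(\bc)$. By Lemma \ref{m2}(c), the hypothesis $x\ge 0$ reads $\|\wb\|\le w_0$, whereas positivity of the image reads $\|\bb(\fb)\wb\|\le w_0$. Hence $\|\bb(\fb)\wb\|\le w_0$ holds whenever $\|\wb\|\le w_0$; choosing $w_0=\|\wb\|$ gives $\|\bb(\fb)\wb\|\le\|\wb\|$ for all $\wb\in\br^3$, i.e. $\|\bb(\fb)\|\le 1$ in the operator norm associated with the Euclidean norm. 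Since $\fb\in S$ was arbitrary, taking the supremum over $S$ yields $\||\bb\||\le 1$.

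I do not anticipate any genuine analytic difficulty here; the obstacle is organizational rather than conceptual. The two points requiring care are verifying that the slice map $E_\f$ is positive (so that the composition $E_\f\circ\D$ inherits positivity), and aligning the triple-indexed coefficients $b_{ml,i}$ correctly with the entries of $\bb(\fb)$, so that the reduced map is exactly multiplication by $\bb(\fb)$. Once the explicit form of $E_\f\circ\D$ is secured, the bound follows immediately from the positivity characterization of Lemma \ref{m2}(c).
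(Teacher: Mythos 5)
Your proposal is correct and follows essentially the same route as the paper: slice $\D$ with the conditional expectation $E_\f$, compute $E_\f(\D(x))=w_0\id+(\bb(\fb)\wb)\s$ from the form \eqref{D3}, and read off $\|\bb(\fb)\wb\|\le w_0$ from the positivity criterion of Lemma \ref{m2}(c). The index bookkeeping matches the paper's definition \eqref{bij}, and the final passage to the supremum over $\fb\in S$ is exactly the paper's conclusion.
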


\begin{proof} Let $x\in \bm_2(\bc)$ (i.e. $x=w_0\id+\wb \s$) be a positive element. Then for any state
$\f(x)=w_0+\langle\fb,\wb\rangle$ (here $\fb=(f_{1},f_{2},f_{3})\in
S$) from \eqref{D3},\eqref{bij} one finds
\begin{eqnarray*}
E_{\varphi}(\Delta(x))&=&
w_{0}\id+\sum\limits_{i,j=1}^{3}\langle\bx_{ij},\overline{\wb}\rangle
f_{i}\sigma_{j}\nonumber\\
&=&w_{0}\id+\bb(\fb)\wb\sigma\nonumber\\
\end{eqnarray*}
where we have used $\f(\sigma_{i})=f_{i}$ and
\begin{eqnarray*}
\sum\limits_{i=1}^{3}\langle\mathbf{b}_{ij},\overline{\wb}\rangle
f_{i}&=&\sum\limits_{l=1}^{3}\sum\limits_{i=1}^{3}b_{ij,l}f_{i}w_{l}\nonumber\\
&=&\sum\limits_{l=1}^{3}\beta_{jl}(\mathbf{f})w_{l}\nonumber\\
&=&(\bb(\fb)\wb)_{j}\nonumber
\end{eqnarray*}

Now positivity of $x$ yields that $E_\f(\D(x))$ is positive, for all
states $\f$, since $E_\f$ is a conditional expectation. Hence,
according to Lemma \ref{m2} positivity of $E_{\varphi}(\Delta(x))$
equivalent to $ \|\bb(\fb)\wb\|\leq w_{0}$ for all $\mathbf{f}$ and
$\wb$ with
 $\|\wb\|<w_{0}$.  Consequently, one finds that
$\|\bb(\fb)\|=\sup\limits_{\|\wb\|\leq 1}\|\bb(\fb)\wb\|\leq 1$,
which yields the assertion.
\end{proof}

\begin{rem} Note that similar characterizations of positive
maps defined on $\bm_2(\bc)$ were considered in \cite{MM2} (see also
\cite{Kos}). Characterization of completely positive mappings from
$\bm_2(\bc)$ into itself with invariant state $\t$ was established
in \cite{RSW}.
\end{rem}

Next we would like to find some conditions for q.q.o. to be
Kadison-Schwarz operators. To do it, we need the following auxiliary fact.

\begin{lem}\label{ac1}
Let $\ab,\cb\in \bc^3$. Then one has
\begin{eqnarray}\label{ac2}
&&(\ab\sigma)\cdot(\overline{\cb}\sigma)-(\cb\sigma)\cdot(\overline{\ab}\sigma)=
\big(\langle\ab,\cb\rangle-\langle\cb,\ab\rangle\big)\id+i\big([\ab,\overline{\cb}]+[\overline{\ab},\cb]\big)\sigma\\
\label{ac3}
&&(\ab\s)\cdot(\overline{\ab}\s)=\|\ab\|^2\id+i[\ab,\overline{\ab}]\s
\end{eqnarray}
 \end{lem}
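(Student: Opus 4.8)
The plan is to reduce both identities to the single multiplication rule for the Pauli matrices,
\begin{equation}\label{plan-pauli}
\sigma_j\sigma_k=\delta_{jk}\id+i\sum_{l=1}^3\epsilon_{jkl}\sigma_l,\qquad j,k\in\{1,2,3\},
\end{equation}
where $\epsilon_{jkl}$ is the Levi-Civita symbol and $[\cdot,\cdot]$ in the statement is read as the vector (cross) product on $\bc^3$ (this is forced by dimension count, since the second term in \eqref{ac2}--\eqref{ac3} is a vector contracted with $\sigma$). First I would establish the auxiliary bilinear formula: for arbitrary $\ab,\db\in\bc^3$,
\begin{equation}\label{plan-mult}
(\ab\sigma)(\db\sigma)=\Big(\sum_{j=1}^3 a_j d_j\Big)\id+i[\ab,\db]\sigma .
\end{equation}
This follows by expanding $(\ab\sigma)(\db\sigma)=\sum_{j,k}a_jd_k\,\sigma_j\sigma_k$ and substituting \eqref{plan-pauli}: the Kronecker part collapses to $\sum_j a_jd_j$, while the $l$-th component of the vector part is $\sum_{j,k}\epsilon_{jkl}a_jd_k=[\ab,\db]_l$, since $\epsilon_{jkl}=\epsilon_{ljk}$.

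With \eqref{plan-mult} in hand both claims are immediate substitutions. For \eqref{ac3} I would put $\db=\overline{\ab}$, so that the scalar part becomes $\sum_j a_j\overline{a_j}=\|\ab\|^2$ and the vector part becomes $i[\ab,\overline{\ab}]\sigma$, which is exactly \eqref{ac3}. For \eqref{ac2} I would apply \eqref{plan-mult} twice, once with $(\ab,\db)=(\ab,\overline{\cb})$ and once with $(\ab,\db)=(\cb,\overline{\ab})$, obtaining
$$(\ab\sigma)(\overline{\cb}\sigma)=\langle\ab,\cb\rangle\id+i[\ab,\overline{\cb}]\sigma,\qquad (\cb\sigma)(\overline{\ab}\sigma)=\langle\cb,\ab\rangle\id+i[\cb,\overline{\ab}]\sigma,$$
where I have used that $\sum_j a_j\overline{c_j}=\langle\ab,\cb\rangle$ is the scalar product of $\bc^3$ (linear in the first argument, which is the convention compatible with the sign of the scalar term in \eqref{ac2}). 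Subtracting these two equalities and invoking the antisymmetry $[\cb,\overline{\ab}]=-[\overline{\ab},\cb]$ of the cross product to replace $-[\cb,\overline{\ab}]$ by $[\overline{\ab},\cb]$ gives precisely \eqref{ac2}.

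There is no genuine obstacle here beyond careful bookkeeping, and the two points deserving attention are both about conjugation and sign. The first is that the scalar part of $(\ab\sigma)(\overline{\cb}\sigma)$ is the \emph{Hermitian} product $\langle\ab,\cb\rangle$ and not a bilinear one, precisely because one factor already carries the bar; this is why no spurious conjugates survive in \eqref{ac2}. The second is the antisymmetry of the cross product, which is exactly what converts the difference of the two vector parts into the symmetric combination $[\ab,\overline{\cb}]+[\overline{\ab},\cb]$ appearing on the right of \eqref{ac2}. Once the convention for $\langle\cdot,\cdot\rangle$ and for $[\cdot,\cdot]$ is fixed as above, the verification is a one-line consequence of \eqref{plan-mult}.
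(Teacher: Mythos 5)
Your proof is correct and is exactly the ``straightforward'' computation the paper omits (it only remarks that the proof is straightforward): everything reduces to the Pauli product rule $\sigma_j\sigma_k=\delta_{jk}\id+i\sum_l\epsilon_{jkl}\sigma_l$, i.e.\ to $(\ab\sigma)(\db\sigma)=(\sum_j a_jd_j)\id+i[\ab,\db]\sigma$ with $[\cdot,\cdot]$ the cross product, followed by the substitutions $\db=\overline{\cb}$, $\db=\overline{\ab}$ and the antisymmetry $[\cb,\overline{\ab}]=-[\overline{\ab},\cb]$. Your remarks on the conjugation convention for $\langle\cdot,\cdot\rangle$ correctly pin down the sign of the scalar term, so nothing is missing.
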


The proof is straightforward.\\

Now introduce some notations. Given $x=w_0+\wb\s$ and a vector
$\fb\in S$ we denote
\begin{eqnarray}\label{x-ml}
&&x_{ml}=\langle\bx_{ml},\wb\rangle, \ \
\xb_{m}=\big(\langle\bx_{m1},\wb\rangle,\langle\bx_{m2},\wb\rangle,\langle\bx_{m3},\wb\rangle\big),\\[2mm]
\label{algam}
&&\a_{ml}=\langle\xb_{m},\xb_{l}\rangle-\langle\xb_{l},\xb_{m}\rangle,
\ \
\g_{ml}=[\xb_{m},\overline{\xb_{l}}]+[\overline{\xb_{m}},\xb_{l}],\\[2mm]
\label{qu}
&&\qb(\fb,\wb)=\big(\langle\b(\fb)_1,[\wb,\overline{\wb}]\rangle,\langle\b(\fb)_2,[\wb,\overline{\wb}]\rangle,
\langle\b(\fb)_3,[\wb,\overline{\wb}]\rangle\big),
\end{eqnarray}
where $\b(\fb)_m=\big(\b(\fb)_{m1},\b(\fb)_{m2},\b(\fb)_{m3}\big)$
(see \eqref{bij}) and as before
$\bx_{ml}=(b_{ml,1},b_{ml,2},b_{ml,3})$.

By $\pi$ we shall denote mapping $\{1,2,3,4\}$ to $\{1,2,3\}$
defined by $\pi(1)=2,\pi(2)=3,\pi(3)=1, \pi(4)=\pi(1)$.

\begin{thm}\label{ks3}
Let $\D:\bm_2(\bc)\to \bm_2(\bc)\o\bm_2(\bc) $ be a unital
Kadison-Schwarz operator with a Haar state $\t$, then it has the
form \eqref{D3} and the coefficients $\{b_{ml,k}\}$ satisfy the
following conditions
\begin{eqnarray}\label{ks11}
&&\|\wb\|^2-i\overset{3}{\underset{m=1}{\sum}} f_m\alpha_{\pi
(m),\pi (m+1)}-\overset{3}{\underset{m=1}{\sum}} \|\xb_{m}\|^2\geq
0\\
\label{ks2} &&
\bigg\|\qb(\fb,\wb)-i\overset{3}{\underset{m=1}{\sum}}
f_m\g_{\pi(m),\pi(m+1)}-[\xb_{m},\overline{\xb}_{m}]\bigg\|\leq
\|\wb\|^2-i\overset{3}{\underset{k=1}{\sum}} f_k\alpha_{\pi (k),\pi
(k+1)}-\overset{3}{\underset{m=1}{\sum}}\|\xb_{m}\|^2.
\end{eqnarray}
for all $\fb\in S,\wb\in \bc^3$.
\end{thm}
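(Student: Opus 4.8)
The plan is to exploit the description \eqref{D3} furnished by Proposition \ref{trace-pre} and to reduce the $4\times4$ positivity encoded in the Kadison--Schwarz inequality \eqref{KS} to a $2\times2$ positivity that Lemma \ref{m2}(c) converts into exactly the scalar and vector conditions \eqref{ks11} and \eqref{ks2}. Note first that the theorem only asserts necessity, so it suffices to extract necessary conditions from $\D(x^*x)-\D(x)^*\D(x)\geq0$. Since $\D$ is unital and positive (so $\D(x^*)=\D(x)^*$), this difference is unchanged when $x$ is replaced by $x+\l\id$; hence in testing \eqref{KS} one may take $x=\wb\s$ with $\wb\in\bc^3$, which is why $w_0$ does not enter the final conditions. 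The key device is the canonical conditional expectation $E_\f=(\f\o id)$: being positive, it yields $E_\f\big(\D(x^*x)\big)\ge E_\f\big(\D(x)^*\D(x)\big)$ in $\bm_2(\bc)$ for every state $\f\leftrightarrow\fb\in S$, and this necessary inequality is what I would compute.

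Next I would evaluate the two sides explicitly. For $x=\wb\s$, formula \eqref{ac3} gives $x^*x=\|\wb\|^2\id+i[\overline{\wb},\wb]\s$, so applying \eqref{D3} and then $E_\f$ (using $\f(\s_m)=f_m$ together with \eqref{bij} and \eqref{qu}) should produce $E_\f\big(\D(x^*x)\big)=\|\wb\|^2\id+i\,\qb(\fb,\wb)\s$. For the other side I would write $\D(x)=\sum_{m}\s_m\o(\overline{\xb}_m\s)$ in the notation \eqref{x-ml}, take its adjoint $\D(x)^*=\sum_{m}\s_m\o(\xb_m\s)$, and multiply, so that
\begin{equation*}
E_\f\big(\D(x)^*\D(x)\big)=\sum_{m,m'}\f(\s_m\s_{m'})\,(\xb_m\s)(\overline{\xb}_{m'}\s).
\end{equation*}
Splitting this sum into its diagonal part $m=m'$ and its off-diagonal part, I would treat the diagonal terms by \eqref{ac3} and the off-diagonal pairs $(\xb_m\s)(\overline{\xb}_{m'}\s)-(\xb_{m'}\s)(\overline{\xb}_m\s)$ by \eqref{ac2}, which is exactly where the quantities $\a_{ml}$ and $\g_{ml}$ of \eqref{algam} arise.

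The bookkeeping step is to collect the scalar ($\id$) and vector ($\s$) parts. Using $\f(\s_m\s_{m'})=\d_{mm'}+i\sum_p\ve_{mm'p}f_p$ and reorganizing the Levi-Civita sums through the cyclic map $\pi$, I expect
\begin{equation*}
E_\f\big(\D(x)^*\D(x)\big)=\Big(\sum_{m}\|\xb_m\|^2+i\sum_{m}f_m\a_{\pi(m),\pi(m+1)}\Big)\id+\Big(i\sum_{m}[\xb_m,\overline{\xb}_m]-\sum_{m}f_m\g_{\pi(m),\pi(m+1)}\Big)\s .
\end{equation*}
Subtracting this from $E_\f\big(\D(x^*x)\big)$ then gives $E_\f\big(\D(x^*x)-\D(x)^*\D(x)\big)=W_0\,\id+i\,\Vb\,\s$, where $W_0$ is precisely the left-hand side of \eqref{ks11} and $\Vb$ is precisely the vector inside the norm in \eqref{ks2}. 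Since this $2\times2$ matrix is self-adjoint (so $W_0$ is real and $i\Vb$ corresponds to a real vector), Lemma \ref{m2}(c) says its positivity is equivalent to $W_0\ge0$ together with $\|i\Vb\|=\|\Vb\|\le W_0$; as $\fb\in S$ and $\wb\in\bc^3$ are arbitrary, these are exactly \eqref{ks11} and \eqref{ks2}.

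I expect the main obstacle to be the careful handling of complex conjugations and of the off-diagonal Pauli products. Concretely, the coefficient occurring in $\D(x)$ is $\langle\bx_{ml},\overline{\wb}\rangle=\overline{\langle\bx_{ml},\wb\rangle}$, so one must keep track of which factor carries a bar when forming $\D(x)^*\D(x)$; moreover the antisymmetrization that produces $\a_{ml}$ and $\g_{ml}$ must be matched against the cyclic indexing $\pi$ so that the cross terms land on $\a_{\pi(m),\pi(m+1)}$ and $\g_{\pi(m),\pi(m+1)}$ with the correct signs. Once these identities are aligned with \eqref{algam} and \eqref{qu}, the conclusion follows at once from Lemma \ref{m2}(c).
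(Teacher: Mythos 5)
Your proposal is correct and follows essentially the same route as the paper: compute $\D(x^*x)-\D(x)^*\D(x)$ via \eqref{D3}, push it through $E_\f$, reduce the Pauli products with Lemma \ref{ac1} and the cyclic indexing $\pi$, and conclude with Lemma \ref{m2}(c). The only cosmetic differences are that you normalize $w_0=0$ at the outset (the paper carries $w_0$ along and observes that those terms cancel between \eqref{DD1} and \eqref{DD2}) and that you apply $E_\f$ before, rather than after, expanding the product $\D(x)^*\D(x)$.
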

\begin{proof} Let $x\in{\bm}_2(\mathbb{C})$ be an arbitrary element, i.e. $x=w_0\id+\wb\cdot\s.$ Then one has
\begin{equation}\label{x*x}
x^*x=\big(|w_0|^2+\|\wb\|^2\big)\id+\big(w_0\overline{\wb}+\overline{w_0}\wb-i\big[\wb,\overline{\wb}\big]\big)\cdot\s.
\end{equation}
According to Proposition \ref{trace-pre} $\D$ has a form \eqref{D3},
therefore, taking into account \eqref{x*x} with \eqref{x-ml} one
finds
\begin{eqnarray}\label{DD1}
\Delta(x^{*}x)&=&(|w_{0}|^{2}+\|\wb\|^{2})\id+\sum\limits_{m,l=1}^{3}(\overline{w}_{0}\overline{{x}}_{m,l}+w_{0}x_{m,l})\sigma_{m}\otimes\sigma_{l}\nonumber\\
&&+
i\sum\limits_{m,l=1}^{3}\langle\mathbf{b}_{m,l},[\wb,\overline{\wb}]\rangle\sigma_{m}\otimes\sigma_{l}\\[2mm]
\label{DD2} \Delta(x)^{*}\Delta(x)
&=&|w_{0}|^{2}\id+\sum\limits_{m,l=1}^{3}(\overline{w}_{0}\overline{x}_{m,l}+w_{0}x_{m,l})\sigma_{m}\otimes\sigma_{l}\nonumber
\\
&&+\bigg(\sum\limits_{m,l=1}^{3}x_{m,l}\sigma_{m}\otimes\sigma_{l}\bigg)
\bigg(\sum\limits_{m,l=1}^{3}\overline{x_{m,l}}\sigma_{m}\otimes\sigma_{l}\bigg).
\end{eqnarray}

Noting that $\xb_m=(x_{m1},x_{m2},x_{m3})$, $m=1,2,3$ we rewrite the
last term of the equality \eqref{DD2} as follows

\begin{eqnarray*}
\bigg(\sum\limits_{m,l=1}^{3}x_{m,l}\sigma_{m}\otimes\sigma_{l}\bigg)
\bigg(\sum\limits_{m,l=1}^{3}\overline{x_{m,l}}\sigma_{m}\otimes\sigma_{l}\bigg)
&=&\bigg(\sum\limits_{m=1}^{3}\sigma_{m}\otimes(\xb_m\sigma)\bigg)
\bigg(\sum\limits_{m=1}^{3}\sigma_{m}\otimes(\overline{\xb}_m\s)\bigg)\nonumber\\
&=&\id\otimes\sum\limits_{k=1}^{3}(\xb_k\sigma)\cdot(\overline{\xb}_k\sigma)\nonumber\\
&&+i\sigma_{1}\otimes
\big((\xb_2\sigma)\cdot(\overline{\xb}_3\sigma)-(\xb_3\sigma)\cdot(\overline{\xb}_2\sigma)\big)\nonumber\\
&& +i\sigma_{2}\otimes
\big((\xb_3\sigma)\cdot(\overline{\xb}_1\sigma)-(\xb_1\sigma)\cdot(\overline{\xb}_3\sigma)\big)\nonumber\\
&&+ i\sigma_{3}\otimes
\big((\xb_1\sigma)\cdot(\overline{\xb}_2\sigma)-(\xb_2\sigma)\cdot(\overline{\xb}_1\sigma)\big)
\end{eqnarray*}

According to Lemma \ref{ac1} and \eqref{algam} the last equality
equals to
\begin{eqnarray}\label{XX}
X:&=&\id\otimes\bigg(\sum\limits_{j=1}^{3}\|\xb_{j}\|^{2}\id+i\sum\limits_{j=1}^{3}[\xb_{j},\overline{\xb}_{j}]\sigma)\bigg)\nonumber\\
&&+
i\sum_{m=1}^3\sigma_{m}\otimes\big(\a_{\pi(m),\pi(m+1)}\id+i\g_{\pi(m),\pi(m+1)}\s\big).
\end{eqnarray}

Then from \eqref{DD1}, \eqref{DD2} one gets
\begin{eqnarray}\label{DD-f}
\Delta(x^{*}x)-\Delta(x)^{*}\Delta(x)=
\|\wb\|^{2}\id+\sum\limits_{m,l=1}^{3}\langle\mathbf{b}_{ml}
,[\wb,\overline{\wb}]\rangle\sigma_{m}\otimes\sigma_{l}-X .
\end{eqnarray}

Now taking an arbitrary state $\f\in S(\bm_2(\bc))$ and applying
$E_\f$ to \eqref{DD-f} we have
\begin{eqnarray}\label{Ef}
E_\f(\Delta(x^{*}x)-\Delta(x)^{*}\Delta(x))=\|w\|^2\id+
i\overset{3}{\underset{m,l=1}{\sum}} \langle
\bx_{ml},[\wb,\overline{\wb}]\rangle f_m\sigma_l-E_\f(X),
\end{eqnarray}
where $\f(\s_m)=f_m$.

From \eqref{XX} one immediately finds
\begin{eqnarray}
E_\f(X)&=&\sum\limits_{m=1}^{3}\|\xb_{m}\|^{2}\id+i\sum\limits_{m=1}^{3}[\xb_{m},\overline{\xb}_{m}]\sigma\nonumber\\
\label{EX}
&&+
i\sum_{m=1}^3f_m(\a_{\pi(m),\pi(m+1)}\id+i\g_{\pi(m),\pi(m+1)}\s)
\end{eqnarray}

Now substituting the last equality \eqref{EX} to \eqref{Ef} with
\eqref{qu} we obtain

\begin{eqnarray*}
E_\f(\Delta(x^{*}x)-\Delta(x)^{*}\Delta(x))&=&
\bigg(\|\wb\|^2-i\overset{3}{\underset{m=1}{\sum}} f_m\alpha_{\pi
(m),\pi (m+1)}-\overset{3}{\underset{m=1}{\sum}}
\|\xb_{m}\|^2\bigg)\id\nonumber\\\label{Ef2}
&&+i\bigg(\qb(\fb,\wb)-i\overset{3}{\underset{m=1}{\sum}}
f_m\g_{\pi(m),\pi(m+1)}-[\xb_{m},\overline{\xb}_{m}]\bigg)\sigma.
\end{eqnarray*}

So, thanks to Lemma \ref{m2} the right hand side of \eqref{Ef2} is
positive if and only if \eqref{ks11} and \eqref{ks2} are satisfied
for all $\fb\in S,\wb\in \bc^3$. Note that the numbers $\a_{ml}$ are
skew-symmetric, i.e. $\overline{\a_{ml}}=-\a_{ml}$, therefore, the
equality \eqref{ks11} has a sense.
\end{proof}

Let us denote
$$
\textbf{h}(\wb)=\big(\langle\bx_{11},[\wb,\overline{\wb}]\rangle,\langle\bx_{12},[\wb,\overline{\wb}]\rangle,\langle\bx_{13},[\wb,\overline{\wb}]\rangle\big).
$$
Then one has the following

\begin{cor}\label{ksf}
Let $\D:\bm_2(\bc)\to \bm_2(\bc)\o\bm_2(\bc) $ be a Kadison-Schwarz
operator given by \eqref{D3}, then the coefficients $\{b_{ml,k}\}$
satisfy the following conditions
\begin{eqnarray}\label{ksf1}
&&\sum_{m=1}^3 \|\xb_{m}\|^2+i\a_{2,3}\leq\|\wb\|^2,\\[2mm]
\label{ksf2}
&&\bigg\|\textbf{h}(\wb)-i
\g_{2,3}+i\overset{3}{\underset{m=1}{\sum}}[\xb_{m},\overline{\xb}_{m}]\bigg\|\leq
\|\wb\|^2-i\a_{2,3}-\overset{3}{\underset{m=1}{\sum}}\|\xb_{m}\|^2,
\end{eqnarray} for all $\wb\in \bc^3$.
\end{cor}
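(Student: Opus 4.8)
Corollary \ref{ksf} specializes Theorem \ref{ks3} to a particular choice of the state vector $\fb$, so the plan is essentially to read off the right substitution rather than to reprove anything.

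The plan is to obtain the Corollary as a direct specialization of Theorem \ref{ks3}. Since the hypotheses of Theorem \ref{ks3} force the validity of \eqref{ks11} and \eqref{ks2} for \emph{every} $\fb\in S$, it suffices to evaluate these two conditions at one convenient extreme point of the unit ball, namely $\fb=(1,0,0)$; the conditions \eqref{ksf1} and \eqref{ksf2} are then exactly what \eqref{ks11} and \eqref{ks2} become after this substitution. All the remaining work is bookkeeping with the permutation $\pi$ and the definitions \eqref{bij}, \eqref{x-ml}, \eqref{algam}, \eqref{qu}.

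First I would observe that a Kadison-Schwarz operator of the form \eqref{D3} is automatically unital (set $w_0=1$, $\wb=0$) and admits $\t$ as a Haar state, since $\t(\s_m)=0$ makes both partial actions of $\t$ on \eqref{D3} collapse to scalars; hence Theorem \ref{ks3} applies and yields \eqref{ks11} and \eqref{ks2} for all $\fb\in S$, $\wb\in\bc^3$. Next I would substitute $\fb=(1,0,0)$, which lies in $S$ because $\|\fb\|=1$. Using $\pi(1)=2$ and $\pi(2)=3$, the $\pi$-indexed sums collapse onto their $m=1$ terms, so that $\sum_{m=1}^3 f_m\a_{\pi(m),\pi(m+1)}=\a_{2,3}$ and $\sum_{m=1}^3 f_m\g_{\pi(m),\pi(m+1)}=\g_{2,3}$. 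With these two identities, \eqref{ks11} reads $\sum_{m=1}^3\|\xb_m\|^2+i\a_{2,3}\le\|\wb\|^2$, which is precisely \eqref{ksf1}.

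The one genuinely notational step is to identify $\qb(\fb,\wb)$ at $\fb=(1,0,0)$ with the vector $\textbf{h}(\wb)$ introduced just before the Corollary. From \eqref{bij} one has $\b((1,0,0))_{ij}=b_{1i,j}$, so that $\b((1,0,0))_m=(b_{1m,1},b_{1m,2},b_{1m,3})=\bx_{1m}$, and therefore by \eqref{qu}
$$\qb((1,0,0),\wb)=\big(\langle\bx_{11},[\wb,\overline{\wb}]\rangle,\langle\bx_{12},[\wb,\overline{\wb}]\rangle,\langle\bx_{13},[\wb,\overline{\wb}]\rangle\big)=\textbf{h}(\wb).$$
Feeding this identity, together with $\g_{2,3}$, $\a_{2,3}$ and the commutator contribution $\sum_{m=1}^3[\xb_m,\overline{\xb}_m]$, into \eqref{ks2} produces \eqref{ksf2}.

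I expect the only delicate point to be the $\s$-part, that is, the inequality \eqref{ks2}. One must track the factor $i$ that appears when the coefficient of $\sigma$ in $E_\f\big(\D(x^*x)-\D(x)^*\D(x)\big)$ is rewritten in the self-adjoint normal form governed by Lemma \ref{m2}(a)--(c), and keep the sign of the commutator term $\sum_m[\xb_m,\overline{\xb}_m]$ consistent with \eqref{EX}; since multiplication by $i$ does not change the Euclidean norm, the resulting bound is the norm of the purely imaginary inner vector. Everything else is a routine verification that the $\fb$-dependent expressions of Theorem \ref{ks3} reduce to the stated ones at the coordinate state $\fb=(1,0,0)$.
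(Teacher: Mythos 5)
Your proposal is correct and follows exactly the paper's own route: the paper proves Corollary \ref{ksf} in one line by substituting $\fb=(1,0,0)$ into \eqref{ks11} and \eqref{ks2} of Theorem \ref{ks3}, which is precisely what you do, and your explicit verification that $\qb((1,0,0),\wb)=\textbf{h}(\wb)$ and that the $\pi$-indexed sums collapse to $\a_{2,3}$ and $\g_{2,3}$ just makes explicit what the paper leaves implicit.
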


The proof immediately follows from the previous Theorem \ref{ks3}
when we take $\fb=(1,0,0)$ in \eqref{ks11},\eqref{ks2}.

\begin{rem}
The provided characterization with \cite{MM3} allows us to construct
examples of positive or Kadison-Schwarz operators which are not
completely positive (see subsection 4.3).
\end{rem}

\section{Dynamics of quantum quadratic operators}

\subsection{General case}

In this section we are going to study dynamics of the quadratic
operator $V_\D$ associated with a q.q.o. $\D$ defined on
$\bm_2(\bc)$.

\begin{prop}\label{D*} Let $\D:\bm_2(\bc)\to \bm_2(\bc)\o\bm_2(\bc)$ be a linear operator
given by \eqref{D3}. Then the bilinear form $\D^*(\cdot\o\cdot)$ is
positive if and only if one holds
\begin{equation}\label{D*1}
\overset{3}{\underset{k=1}{\sum}}\bigg|\overset{3}{\underset{i,j=1}{\sum}}b_{ij,k}f_ip_j\bigg|^{2}\leq
1 \ \ \ \textrm{for all} \ \ \fb,\pb\in S.
\end{equation}
\end{prop}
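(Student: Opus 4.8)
The plan is to unwind the definition \eqref{cqo} of the conjugate operator together with the state-vector description of Lemma \ref{m2}(d), and to read off the condition directly. Positivity of the bilinear form $\D^*(\cdot\o\cdot)$ means that $\D^*(\f\o\p)$ is a positive functional on $\bm_2(\bc)$ for every pair of states $\f,\p\in S(\bm_2(\bc))$; by positive-homogeneity it suffices to test on states rather than on arbitrary positive functionals. So the whole problem reduces to evaluating $\D^*(\f\o\p)$ explicitly and deciding when it is positive.

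First I would compute $\D^*(\f\o\p)(x)=(\f\o\p)(\D x)$ for $x=w_0\id+\wb\s$, using the explicit form \eqref{D3}. Writing $\f\leftrightarrow\fb$ and $\p\leftrightarrow\pb$ as in Lemma \ref{m2}(d), so that $(\f\o\p)(\s_m\o\s_l)=f_mp_l$ and $(\f\o\p)(\id\o\id)=1$, and recalling that $\langle\bx_{ml},\overline{\wb}\rangle=\sum_{k=1}^3 b_{ml,k}w_k$ with the $b_{ml,k}$ real (Proposition \ref{trace-pre}), one obtains
$$
\D^*(\f\o\p)(x)=w_0+\sum_{k=1}^3\bigg(\sum_{i,j=1}^3 b_{ij,k}f_ip_j\bigg)w_k .
$$
Thus $\D^*(\f\o\p)$ is exactly the functional associated, in the sense of Lemma \ref{m2}(d), to the vector $\mathbf{g}=(g_1,g_2,g_3)$ with $g_k=\sum_{i,j=1}^3 b_{ij,k}f_ip_j$, and $\mathbf{g}$ is automatically real since the $b_{ij,k},f_i,p_j$ all are.

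The key point is that this functional is already unital, because $\D^*(\f\o\p)(\id)=(\f\o\p)(\D\id)=(\f\o\p)(\id\o\id)=1$. Hence positivity of $\D^*(\f\o\p)$ is equivalent to it being a state, and Lemma \ref{m2}(d) says this holds precisely when $\|\mathbf{g}\|\le 1$, i.e.
$$
\sum_{k=1}^3\bigg|\sum_{i,j=1}^3 b_{ij,k}f_ip_j\bigg|^2\le 1 .
$$
Demanding this for every pair of states, that is for all $\fb,\pb\in S$, gives exactly \eqref{D*1}; conversely \eqref{D*1} forces each $\D^*(\f\o\p)$ to be a state and hence positive, closing the equivalence. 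There is no genuine analytic obstacle here, since nothing is being estimated: the only steps requiring care are the index bookkeeping that correctly identifies the vector $\mathbf{g}$ (keeping the roles of $\fb$ and $\pb$ and the position of $k$ in $b_{ij,k}$ straight) and the observation that unitality of $\D^*(\f\o\p)$ lets one replace ``positive functional'' by ``state'', so that the characterization of Lemma \ref{m2}(d) applies verbatim.
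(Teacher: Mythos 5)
Your proposal is correct and follows essentially the same route as the paper: compute $\D^*(\f\o\p)(\s_k)=\sum_{i,j}b_{ij,k}f_ip_j$ from \eqref{D3} and apply Lemma \ref{m2}(d) to the resulting vector. Your explicit remark that $\D^*(\f\o\p)(\id)=1$, so that positivity is equivalent to being a state, is a small clarification the paper leaves implicit, but it is not a different argument.
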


\begin{proof} Take arbitrary states $\f,\p\in
S(\bm_2(\bc))$ and $\fb,\pb\in S$ be the corresponding vectors (see
\eqref{state}). Then from \eqref{D3} one finds
\begin{equation*}
\Delta^{*}(\varphi\otimes
\psi)(\s_k)=\overset{3}{\underset{i,j=1}{\sum}}b_{ij,k}f_ip_j, \ \ \
k=1,2,3.
\end{equation*}

Due to Lemma \ref{m2} (d) the functional $\Delta^{*}(\varphi\otimes
\psi)$ is a state if and only if the vector
$$
\textbf{f}_{\Delta^{*}(\varphi,
\psi)}=\bigg(\overset{3}{\underset{i,j=1}{\sum}}b_{ij,1}f_ip_j,
\overset{3}{\underset{i,j=1}{\sum}}b_{ij,2}f_ip_j,\overset{3}{\underset{i,j=1}{\sum}}b_{ij,3}f_ip_j\bigg).
$$
satisfies $\|\fb_{\D^*(\f,\p)}\|\leq 1$, which is the required
assertion.

\end{proof}

From the proof of Propositions \ref{positive} and \ref{D*} we get
the following

\begin{cor}\label{qc2} Let $\bb(\fb)$ be the corresponding matrix to an
operator given by \eqref{D3}. Then $\||\bb\||\leq 1$ if and only if
\eqref{D*1} is satisfied.

\end{cor}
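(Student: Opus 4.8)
The plan is to exhibit both conditions as the supremum of one and the same real trilinear form over the product of three copies of the unit ball $S$. By Proposition \ref{trace-pre} the coefficients $b_{ij,k}$ are real, so I would introduce
\[
\Phi(\ab,\cb,\db)=\sum_{i,j,k=1}^3 b_{ij,k}\,a_i c_j d_k,\qquad \ab,\cb,\db\in\br^3,
\]
a real trilinear form on $\br^3\times\br^3\times\br^3$. Everything below reduces to reading off $\||\bb\||$ and the left-hand side of \eqref{D*1} as suprema of $\Phi$, the only difference being which slots the running vectors are inserted into.

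First I would revisit the proof of Proposition \ref{positive}. There $\|\bb(\fb)\|=\sup_{\|\wb\|\le1}\|\bb(\fb)\wb\|$ is the Euclidean operator norm of the real matrix $\bb(\fb)$, so $\wb$ ranges over the real unit ball $S$. Using the dual description of the operator norm, $\|\bb(\fb)\wb\|=\sup_{\|\db\|\le1}\langle\bb(\fb)\wb,\db\rangle$, and a direct index computation from \eqref{bij} gives $\langle\bb(\fb)\wb,\db\rangle=\Phi(\fb,\db,\wb)$ after relabelling the three indices of $b_{ij,k}$. Consequently
\[
\||\bb\||=\sup_{\fb\in S}\sup_{\|\wb\|\le1}\|\bb(\fb)\wb\|=\sup_{\fb,\db,\wb\in S}\Phi(\fb,\db,\wb).
\]

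Next I would revisit the proof of Proposition \ref{D*}. There the vector $\fb_{\D^*(\f,\p)}$ has components $\sum_{i,j}b_{ij,k}f_i p_j$, and \eqref{D*1} says precisely that $\|\fb_{\D^*(\f,\p)}\|\le1$ for every $\fb,\pb\in S$. Again writing the Euclidean norm in dual form, $\|\fb_{\D^*(\f,\p)}\|=\sup_{\|\db\|\le1}\langle\fb_{\D^*(\f,\p)},\db\rangle=\sup_{\|\db\|\le1}\Phi(\fb,\pb,\db)$. Hence \eqref{D*1} holds for all $\fb,\pb\in S$ if and only if $\sup_{\fb,\pb,\db\in S}\Phi(\fb,\pb,\db)\le1$.

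Finally I would observe that both displayed suprema are the supremum of $\Phi$ over $S\times S\times S$: in each case the three arguments are filled by independent vectors ranging over the unit ball, and a fixed trilinear form has a single supremum over this product regardless of how the three slots are named. Writing $M$ for this common value, we get $\||\bb\||=M$ and that the left-hand side of \eqref{D*1} also has supremum $M$, so $\||\bb\||\le1$ if and only if \eqref{D*1} holds. The main obstacle I anticipate is purely bookkeeping: verifying the two contraction identities $\langle\bb(\fb)\wb,\db\rangle=\Phi(\fb,\db,\wb)$ and $\langle\fb_{\D^*(\f,\p)},\db\rangle=\Phi(\fb,\pb,\db)$, i.e. confirming that the matrix--vector contraction of Proposition \ref{positive} and the bilinear contraction of Proposition \ref{D*} are built from the very same tensor $b_{ij,k}$ with its slots merely permuted. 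Reality of the coefficients is what removes any complex-conjugation discrepancy between the two pictures, so that both genuinely produce the single real form $\Phi$.
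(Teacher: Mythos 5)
Your proposal is correct and follows the same route the paper intends: the paper offers no argument beyond ``From the proof of Propositions \ref{positive} and \ref{D*} we get the following,'' and your identification of both $\||\bb\||$ and $\sup_{\fb,\pb\in S}\|\fb_{\D^*(\f,\p)}\|$ with the supremum of the single real trilinear form $\Phi(\ab,\cb,\db)=\sum_{i,j,k}b_{ij,k}a_ic_jd_k$ over $S\times S\times S$ (via self-duality of the Euclidean norm) is exactly the bridge those two proofs supply. Indeed this duality step is genuinely needed --- the free index of $b_{ij,k}$ sits in the second slot in Proposition \ref{positive} and in the third slot in Proposition \ref{D*} --- and your write-up makes explicit what the paper leaves implicit, with the index bookkeeping checking out.
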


Let us find some sufficient condition for the coefficients
$\{b_{ij,k}\}$ to satisfy \eqref{D*1}.

\begin{cor}\label{D*2} Let
\begin{equation}\label{D*3}
\overset{3}{\underset{i,j,k=1}{\sum}}|b_{ij,k}|^{2}\leq 1
\end{equation}
be satisfied, then \eqref{D*1} holds.
\end{cor}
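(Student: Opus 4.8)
The plan is to recognize \eqref{D*1} as a termwise Cauchy--Schwarz estimate and to exploit the product structure of the constraint set. For each fixed $k\in\{1,2,3\}$ I would view the expression $\sum_{i,j=1}^3 b_{ij,k}f_ip_j$ as the pairing of the ``coefficient array'' $(b_{ij,k})_{i,j}$ with the rank-one array $(f_ip_j)_{i,j}$, both regarded as vectors in $\bc^9$ (or $\br^9$, since by Proposition~\ref{trace-pre} the $b_{ij,k}$ are real). Applying the Cauchy--Schwarz inequality to this pairing gives, for every $k$,
\begin{equation*}
\bigg|\overset{3}{\underset{i,j=1}{\sum}}b_{ij,k}f_ip_j\bigg|^{2}\leq
\bigg(\overset{3}{\underset{i,j=1}{\sum}}|b_{ij,k}|^{2}\bigg)
\bigg(\overset{3}{\underset{i,j=1}{\sum}}|f_ip_j|^{2}\bigg).
\end{equation*}

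The next step is to use the factorization of the second factor: since $\sum_{i,j}|f_ip_j|^2=\big(\sum_i|f_i|^2\big)\big(\sum_j|p_j|^2\big)=\|\fb\|^2\|\pb\|^2$, and since $\fb,\pb\in S$ forces $\|\fb\|\leq 1$ and $\|\pb\|\leq 1$, this second factor is bounded by $1$ uniformly over all admissible $\fb,\pb$. Hence for each $k$ one gets $\big|\sum_{i,j}b_{ij,k}f_ip_j\big|^{2}\leq\sum_{i,j}|b_{ij,k}|^{2}$, and this bound is independent of the particular choice of states.

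Finally I would sum these three inequalities over $k$ and invoke the hypothesis \eqref{D*3}, which yields
\begin{equation*}
\overset{3}{\underset{k=1}{\sum}}\bigg|\overset{3}{\underset{i,j=1}{\sum}}b_{ij,k}f_ip_j\bigg|^{2}\leq
\overset{3}{\underset{i,j,k=1}{\sum}}|b_{ij,k}|^{2}\leq 1,
\end{equation*}
exactly \eqref{D*1}. There is no genuine obstacle here; the only point requiring a little care is that the Cauchy--Schwarz bound must hold \emph{uniformly} in $\fb,\pb$, which is guaranteed precisely because the slack factor $\|\fb\|^2\|\pb\|^2$ never exceeds $1$ on the unit ball $S$. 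I would note in passing that \eqref{D*3} is strictly stronger than \eqref{D*1}, so the implication is one-directional, consistent with its being stated as a sufficient condition.
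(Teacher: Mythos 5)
Your proposal is correct and follows essentially the same route as the paper: a Cauchy--Schwarz estimate on the pairing of $(b_{ij,k})_{i,j}$ with $(f_ip_j)_{i,j}$ viewed as vectors of length nine, followed by the factorization $\sum_{i,j}|f_ip_j|^2=\|\fb\|^2\|\pb\|^2\leq 1$ and summation over $k$. The paper merely inserts a triangle-inequality step before applying Cauchy--Schwarz to the absolute values, which is an immaterial difference.
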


\begin{proof} Let \eqref{D*3} be satisfied. Take any $\fb,\pb\in
S$, then
\begin{eqnarray*}
\bigg|\overset{3}{\underset{i,j=1}{\sum}}b_{ij,k}f_ip_j\bigg|^{2}&\leq&
\bigg(\overset{3}{\underset{i,j=1}{\sum}}|b_{ij,k}||f_ip_j|\bigg)^{2}\\
&\leq&\overset{3}{\underset{i,j=1}{\sum}}|b_{ij,k}|^{2}
\overset{3}{\underset{i=1}{\sum}}|f_i|^{2}\overset{3}{\underset{j=1}{\sum}}|p_j|^{2}\\
&\leq&\overset{3}{\underset{i,j=1}{\sum}}|b_{ij,k}|^{2}
\end{eqnarray*}
which implies the assertion.
\end{proof}

Let us consider the quadratic operator, which is defined by
$V_\D(\f)=\D^*(\f\o\f)$, $\f\in S(\bm_{2}(\bc))$. According to
Proposition \ref{D*} and Corollary \ref{qc2} we conclude that the
operator $V_\D$ maps $S(\bm_{2}(\bc))$ into itself if and only if
$\||\bb\||\leq 1$. To study the dynamics of $V_\D$ on $S(\bm_{2}(\bc))$ it is enough
to investigate behaviour of the corresponding vector $\fb_{V_\D(\f)}$ in $\br^3$. Therefore, from \eqref{D3} we find that
\begin{eqnarray*}
V_\Delta(\varphi)(\s_k)=\overset{3}{\underset{i,j=1}{\sum}} b_{ij,k}f_if_j, \ \ \fb\in S.
\end{eqnarray*}

This suggests us the consideration of a nonlinear operator $V:S\to
S$ defined by
\begin{equation}\label{V}
V(\textbf{f})_{k}=\overset{3}{\underset{i,j=1}{\sum}}b_{ij,k}f_{i}f_{j},
  \ \ \  k=1,2,3.
\end{equation}
where $\fb=(f_1,f_2,f_3)\in S$. Furthermore, we are going to study
dynamics of $V$.

Since $S$ is a convex compact set, then due to Schauder theorem $V$
has at least one fixed point. One can see that one of the fixed
points is $(0,0,0)$. Furthermore, we will be interested on
uniqueness (stability ) of this fixed point.

Denote
\begin{equation}\label{alp}
\a_k=\sqrt{\sum_{j=1}^3\bigg(\sum_{i=1}^3|b_{ij,k}|\bigg)^{2}}+
\sqrt{\sum_{i=1}^3\bigg(\sum_{j=1}^3|b_{ij,k}|\bigg)^{2}}, \ \ \
\a=\sum_{k=1}^3\a_k^2
\end{equation}

\begin{thm}\label{alfa} If $\a<1$ then $V$ is a contraction, hence $(0,0,0)$ is a unique stable fixed point.
\end{thm}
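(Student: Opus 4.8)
The plan is to show that the map $V:S\to S$ defined in \eqref{V} is a contraction with respect to the Euclidean norm on $\br^3$ whenever $\a<1$, and then to invoke the Banach fixed point theorem to conclude that the fixed point $(0,0,0)$ is unique and globally attracting. The key mechanism is that $V$ is a quadratic (homogeneous of degree two) polynomial map, so the difference $V(\fb)-V(\gb)$ can be factored in a way that exposes one factor of the form $(\fb-\gb)$ and one factor that stays bounded on $S$.

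The main computation would be to estimate $\norm{V(\fb)-V(\gb)}$ for $\fb,\gb\in S$. For each component, I would write
\begin{equation*}
V(\fb)_k-V(\gb)_k=\sum_{i,j=1}^3 b_{ij,k}\big(f_if_j-g_ig_j\big)
=\sum_{i,j=1}^3 b_{ij,k}\big(f_i(f_j-g_j)+(f_i-g_i)g_j\big).
\end{equation*}
Splitting this into the two sums, applying the triangle inequality, and then a Cauchy--Schwarz estimate over the index being summed against $(f_j-g_j)$ (respectively $(f_i-g_i)$), while bounding the remaining $f_i$ or $g_j$ using $\norm{\fb},\norm{\gb}\le 1$, should produce
\begin{equation*}
\abs{V(\fb)_k-V(\gb)_k}\le\left(\sqrt{\sum_{j=1}^3\Big(\sum_{i=1}^3\abs{b_{ij,k}}\Big)^2}+\sqrt{\sum_{i=1}^3\Big(\sum_{j=1}^3\abs{b_{ij,k}}\Big)^2}\right)\norm{\fb-\gb}=\a_k\norm{\fb-\gb},
\end{equation*}
where $\a_k$ is exactly the quantity in \eqref{alp}. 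Squaring and summing over $k=1,2,3$ then gives $\norm{V(\fb)-V(\gb)}^2\le\big(\sum_k\a_k^2\big)\norm{\fb-\gb}^2=\a\,\norm{\fb-\gb}^2$, so $\norm{V(\fb)-V(\gb)}\le\sqrt{\a}\,\norm{\fb-\gb}$.

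Since $\a<1$ we have $\sqrt{\a}<1$, so $V$ is a genuine contraction on the complete metric space $S$ (closed unit ball, hence complete). By the Banach contraction principle $V$ has a unique fixed point, and since $(0,0,0)$ is already known to be a fixed point it must be the one, with every trajectory converging to it; this gives both uniqueness and stability.

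The step I expect to require the most care is matching the two Cauchy--Schwarz groupings precisely to the two summands in the definition of $\a_k$. One must be careful about which index is contracted against the difference vector and which against the bounded vector, so that the first term of $\a_k$ comes from the $f_i(f_j-g_j)$ piece and the second from the $(f_i-g_i)g_j$ piece (or vice versa); an imprecise grouping would yield a weaker or differently shaped bound. The only other point to verify is that the two intermediate vectors $(f_j-g_j)_j$ and $(f_i-g_i)_i$ both have Euclidean norm $\norm{\fb-\gb}$, which is immediate, and that $\abs{f_i}\le 1$ and $\abs{g_j}\le 1$ follow from membership in $S$; these are routine.
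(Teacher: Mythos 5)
Your proposal is correct and follows essentially the same route as the paper: the same splitting $f_if_j-p_ip_j=f_i(f_j-p_j)+p_j(f_i-p_i)$, the same use of $|f_i|,|p_j|\le 1$, and the same two Cauchy--Schwarz groupings yielding the componentwise bound $|V(\fb)_k-V(\pb)_k|\le\a_k\|\fb-\pb\|$. Your explicit final step of squaring and summing over $k$ to get the Lipschitz constant $\sqrt{\a}$ is actually slightly more careful than the paper, which leaves that last aggregation implicit.
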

\begin{proof}
Let us take $\fb,\pb\in S$ and consider the difference
\begin{eqnarray*}
|V(\textbf{f})_{k}-V(\textbf{p})_{k}|&\leq& \overset{3}{\underset{i,j=1}{\sum}}|b_{ij,k}||f_{i}f_{j}-p_{i}p_{j}|\\
&\leq&
\overset{3}{\underset{i,j=1}{\sum}}|b_{ij,k}||f_{i}||f_{j}-p_{j}|+
\overset{3}{\underset{i,j=1}{\sum}}|b_{ij,k}||p_{j}||f_{i}-p_{i}|\\
&\leq&\overset{3}{\underset{i,j=1}{\sum}}|b_{ij,k}||f_{j}-p_{j}|+
\overset{3}{\underset{i,j=1}{\sum}}|b_{ij,k}||f_{i}-p_{i}|\\
&\leq&
\bigg(\sqrt{\sum_{j=1}^3\bigg(\sum_{i=1}^3|b_{ij,k}|\bigg)^{2}}+
\sqrt{\sum_{i=1}^3\bigg(\sum_{j=1}^3|b_{ij,k}|\bigg)^{2}}\bigg)\|\fb-\pb\|\\
&=&\a_k \|\fb-\pb\|,
\end{eqnarray*}
where $k=1,2,3.$ Hence, $V$ is a contraction, so it has a unique
fixed point. This completes the proof.
\end{proof}

Note that the condition $\a<1$ in Theorem \ref{alfa} is too strong,
therefore, it would be interesting to find more weaker conditions
than the provided one.

Put
\begin{equation}\label{delta123}
\d_k=\sum_{i,j=1}^3 |b_{ij,k}|, \ \ \ k=1,2,3.
\end{equation}
and denote $\db=(\d_1,\d_2,\d_3)$.

Given a quadratic operator $V$ by \eqref{V} define a new operator $\tilde V:\br^3\to\br^3$ by
\begin{equation}\label{nV}
\tilde V(\pb)_k=\overset{3}{\underset{i,j=1}{\sum}}|b_{ij,k}|p_ip_j,
\ \ \pb\in\br^3, \ k=1,2,3.
\end{equation}

For any given $\fb\in S$, we denote $\g_\fb=\max\{|f_1|,|f_2|,|f_3|\}$.
It is clear that $\g_\fb\leq 1$.

\begin{prop}\label{g-f-V} If the sequence $\{\tilde V^n(\db)\}$ is
bounded, then for any $\fb\in S$ with $\g_\fb<1$ one has $V^n(\fb) \to (0,0,0)$ as $n\to \infty$.
\end{prop}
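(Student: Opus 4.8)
The plan is to dominate the nonlinear dynamics of $V$ by the ``modulus'' dynamics of $\tilde V$ on the positive octant, and then to exploit the quadratic homogeneity of $\tilde V$ to pull out a super-exponentially small factor. Throughout I write $|\fb|=(|f_1|,|f_2|,|f_3|)$ for the componentwise modulus and order vectors componentwise. The two structural facts I would record first about $\tilde V$ from \eqref{nV} are that it has nonnegative coefficients, hence it is monotone on the positive octant (if $0\le\pb\le\qb$ componentwise then $\tilde V(\pb)\le\tilde V(\qb)$), and that it is positively homogeneous of degree two, i.e. $\tilde V(\l\pb)=\l^2\tilde V(\pb)$ for $\l\ge0$.

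First I would establish the pointwise domination $|V(\fb)|\le\tilde V(|\fb|)$: for each $k$ the triangle inequality gives $|V(\fb)_k|=\big|\sum_{i,j}b_{ij,k}f_if_j\big|\le\sum_{i,j}|b_{ij,k}|\,|f_i|\,|f_j|=\tilde V(|\fb|)_k$. Combining this with monotonicity of $\tilde V$, a straightforward induction yields $|V^n(\fb)|\le\tilde V^n(|\fb|)$ for every $n$, since $|V^{n+1}(\fb)|\le\tilde V(|V^n(\fb)|)\le\tilde V(\tilde V^n(|\fb|))=\tilde V^{n+1}(|\fb|)$.

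Next I would connect the right-hand side to the sequence $\{\tilde V^n(\db)\}$ of the hypothesis. Writing $\mathbf{1}=(1,1,1)$, one reads off directly from \eqref{nV} and \eqref{delta123} that $\tilde V(\mathbf{1})=\db$. Since $|f_i|\le\g_\fb$ for every $i$, we have $|\fb|\le\g_\fb\mathbf{1}$, so monotonicity gives $\tilde V^n(|\fb|)\le\tilde V^n(\g_\fb\mathbf{1})$. A short induction using homogeneity then produces the key identity $\tilde V^n(\g_\fb\mathbf{1})=\g_\fb^{\,2^n}\,\tilde V^{n-1}(\db)$ for $n\ge1$: the base case is $\tilde V(\g_\fb\mathbf{1})=\g_\fb^{2}\tilde V(\mathbf{1})=\g_\fb^{2}\db$, and in the inductive step the exponent doubles precisely because $\tilde V$ is quadratic.

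Finally, since the Euclidean norm of a vector equals that of its componentwise modulus and is monotone on the positive octant, the chain above gives $\|V^n(\fb)\|=\||V^n(\fb)|\|\le\|\tilde V^n(|\fb|)\|\le\g_\fb^{\,2^n}\|\tilde V^{n-1}(\db)\|$. By hypothesis $\{\tilde V^n(\db)\}$ is bounded, say by $M$, whence $\|V^n(\fb)\|\le M\g_\fb^{\,2^n}$; as $0\le\g_\fb<1$ and $2^n\to\infty$, the factor $\g_\fb^{\,2^n}\to0$ and therefore $V^n(\fb)\to(0,0,0)$. The one point deserving emphasis is the doubling of the exponent: it is exactly this quadratic scaling that makes $\g_\fb^{\,2^n}$ decay fast enough to absorb the merely bounded sequence $\tilde V^{n-1}(\db)$, and the strict inequality $\g_\fb<1$ is essential, since $\g_\fb=1$ would yield no decay at all.
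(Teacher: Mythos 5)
Your proof is correct and follows essentially the same route as the paper: both arguments culminate in the key bound $|V^n(\fb)_k|\leq \g_\fb^{2^n}\,\tilde V^{n-1}(\db)_k$ (the paper's \eqref{Vn}), obtained by induction from the triangle-inequality domination of $V$ by $\tilde V$ and the quadratic doubling of the exponent. Your reorganization into two separate steps (monotone domination $|V^n(\fb)|\leq \tilde V^n(|\fb|)$, then the homogeneity identity via $\tilde V(\mathbf{1})=\db$) is a tidier presentation of the same estimate, not a different method.
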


\begin{proof} From \eqref{V} we immediately find
\begin{equation*}
|V(\fb)_k|\leq \g_\fb^2\sum_{i,j=1}^3|b_{ij,k}|=\g^2_\fb \d_k, \ \ \ k=1,2,3.
\end{equation*}
Hence, the last inequality implies that
\begin{equation}\label{V-1}
|V^2(\fb)_k|\leq \sum_{i,j=1}^3|b_{ij,k}||V(\fb)_i||V(\fb)_j|\leq
\g^{2^2}_\fb\tilde V(\db)_k, \ \ \ k=1,2,3,
\end{equation}
here as before $\db=(\d_1,\d_2,\d_3)$.

Hence, using mathematical induction one can get
\begin{equation}\label{Vn}
|V^n(\fb)_k|\leq  \g^{2^n}_\fb\tilde V^{n-1}(\db)_k, \ \ \textrm{for any} \  n\in\bn,\  k=1,2,3
\end{equation}

Due to $\g_\fb<1$ and boundedness of $\{\tilde V^n(\db)_k\}$, from  \eqref{Vn} we obtain the desired assertion.
\end{proof}

Next Lemma provides us some sufficient condition for the boundedness of $\{\tilde V^n(\db)_k\}$.

\begin{lem}\label{bb1} Assume that one has
\begin{equation}\label{bb2}
\sum_{i,j=1}^3|b_{ij,k}|\leq 1 , \ \ k=1,2,3.
\end{equation}
Then the sequence
$\{\tilde V^n(\db)_k\}$ is bounded.
\end{lem}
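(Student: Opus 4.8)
The plan is to track the growth of $\tilde V^n(\db)$ through its largest coordinate. First I would observe that $\tilde V$ preserves the nonnegative orthant: since every coefficient $|b_{ij,k}|$ is nonnegative, $\pb$ having nonnegative entries forces $\tilde V(\pb)$ to have nonnegative entries as well. As $\db=(\delta_1,\delta_2,\delta_3)$ has nonnegative entries, all iterates $\tilde V^n(\db)$ stay in the nonnegative orthant, and I may therefore work with the scalar quantity $M_n=\max_{k\in\{1,2,3\}}\tilde V^n(\db)_k\geq 0$.

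The key step is a submultiplicative recursion for $M_n$. For $\pb$ in the nonnegative orthant with $\max_i p_i\leq M$, the definition \eqref{nV} together with $p_ip_j\leq M^2$ gives
\[
\tilde V(\pb)_k=\sum_{i,j=1}^3|b_{ij,k}|\,p_ip_j\leq M^2\sum_{i,j=1}^3|b_{ij,k}|=M^2\delta_k\leq M^2,
\]
where the last inequality uses the hypothesis \eqref{bb2}. Applying this with $\pb=\tilde V^n(\db)$ and $M=M_n$ yields $\tilde V^{n+1}(\db)_k\leq M_n^2$ for every $k$, hence $M_{n+1}\leq M_n^2$.

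It remains to feed in the starting value. By \eqref{delta123} and \eqref{bb2} we have $M_0=\max_k\delta_k\leq 1$, so an immediate induction on the recursion $M_{n+1}\leq M_n^2$ gives $M_n\leq M_0^{2^n}\leq 1$ for all $n\in\bn$. In particular $0\leq\tilde V^n(\db)_k\leq M_n\leq 1$ for every $k$, so the sequence $\{\tilde V^n(\db)_k\}$ is bounded (indeed it lies in $[0,1]$, and in fact tends to $0$ when $M_0<1$).

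There is no serious obstacle in this argument; the only point requiring care is that the sup-norm estimate is valid precisely because the iterates remain in the nonnegative orthant, so that the crude bound $p_ip_j\leq(\max_i p_i)^2$ can be used without worrying about sign cancellations. The role of the hypothesis \eqref{bb2} is exactly to make the scalar map $M\mapsto M^2$ (rather than $M\mapsto CM^2$ with $C>1$) govern the growth, which confines $M_n$ to the unit interval and thereby forces boundedness.
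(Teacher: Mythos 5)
Your proof is correct and uses essentially the same mechanism as the paper's: a one-step estimate $\tilde V(\pb)_k\leq M^2\delta_k\leq M^2$ coming from \eqref{bb2}, propagated by induction. The only cosmetic difference is the invariant you carry — you track the scalar $M_n=\max_k\tilde V^n(\db)_k$ and obtain $M_{n+1}\leq M_n^2$, hence $M_n\leq M_0^{2^n}\leq 1$, whereas the paper keeps the coordinatewise bound $\tilde V^n(\db)_k\leq\delta_k$; both confine the iterates to $[0,1]$, and your version even anticipates the convergence statement of the subsequent lemma.
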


\begin{proof} From \eqref{bb2} we conclude that $\d_k\leq 1$ for
every $k=1,2,3$. Therefore, it follows from  \eqref{nV} that
$$
|\tilde V(\db)_k|=\sum_{i,j=1}^3|b_{ij,k}|\d_i\d_j\leq \d_k\leq 1.
$$
Now assume that $|\tilde V^{m}(\db)_k|\leq \d_k$ for every $k=1,2,3$. Then, due to assumption, from \eqref{bb2}  one gets
\begin{eqnarray*}
|\tilde V^{m+1}(\db)_k|&=&\sum_{i,j=1}^3|b_{ij,k}||\tilde V^{m}(\db)_i||\tilde V^{m}(\db)_j|\\
&\leq &\sum_{i,j=1}^3|b_{ij,k}|\d_i\d_j\\
&\leq& \d_k.
\end{eqnarray*}
Hence, the mathematical induction implies that $|\tilde V^n(\db)_k|\leq \d_k$ for every $n\in\bn$,
$k=1,2,3$. This completes the proof.
\end{proof}

Now we are interested when the sequence $\{\tilde V^n(\db)\}$
converges to $(0,0,0)$.

\begin{lem}\label{bb33} Assume that \eqref{bb2} is satisfied. If there is $n_0\in\bn$ such that $\tilde
V^{n_0}(\db)_k<1$ for every $k=1,2,3$, then $\tilde V^n(\db)\to
(0,0,0)$ as $n\to \infty$;
\end{lem}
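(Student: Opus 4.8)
The plan is to reduce everything to tracking the single scalar quantity $M_n=\max_{k}\tilde V^n(\db)_k$ and to show that it obeys a self-improving quadratic recursion, so that once it drops below $1$ it decays doubly-exponentially to $0$.

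First I would record two elementary facts. Since $\db$ has nonnegative entries and all coefficients $|b_{ij,k}|$ are nonnegative, every iterate $\tilde V^n(\db)$ has nonnegative components; write $a_k^{(n)}=\tilde V^n(\db)_k\ge 0$. Moreover, assumption \eqref{bb2} gives $\d_k=\sum_{i,j}|b_{ij,k}|\le 1$ for each $k$ (this was already noted at the start of Lemma \ref{bb1}). These two facts are exactly what make the quadratic bound below work.

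Next I would establish the key recursion. For each $k$ one has, using $0\le a_i^{(n)},a_j^{(n)}\le M_n$ together with $\d_k\le 1$,
\begin{equation*}
a_k^{(n+1)}=\sum_{i,j=1}^3|b_{ij,k}|\,a_i^{(n)}a_j^{(n)}\le M_n^2\sum_{i,j=1}^3|b_{ij,k}|=M_n^2\,\d_k\le M_n^2 .
\end{equation*}
Taking the maximum over $k$ yields $M_{n+1}\le M_n^2$. Then I would invoke the hypothesis: the assumption $\tilde V^{n_0}(\db)_k<1$ for all $k$ is precisely $M_{n_0}<1$. Iterating the recursion from $n_0$ gives $M_{n_0+j}\le M_{n_0}^{\,2^{j}}$ for every $j\ge 0$; since $0\le M_{n_0}<1$, the right-hand side tends to $0$ as $j\to\infty$. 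Hence $M_n\to 0$, and because $0\le a_k^{(n)}\le M_n$ this forces $\tilde V^n(\db)\to(0,0,0)$.

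No single step is a serious obstacle here; the computation is short. The only point that genuinely requires care is the nonnegativity of the iterates, which is what licenses the bound $a_i^{(n)}a_j^{(n)}\le M_n^2$ and hence the clean recursion $M_{n+1}\le M_n^2$. It is also worth emphasizing that condition \eqref{bb2} is used exactly to keep $\d_k\le 1$, so that the recursion reads $M_{n+1}\le M_n^2$ rather than $M_{n+1}\le(\max_k\d_k)M_n^2$; combined with the single index $n_0$ at which $M_{n_0}<1$, this is what upgrades the mere boundedness supplied by Lemma \ref{bb1} into actual convergence to the origin.
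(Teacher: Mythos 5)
Your proof is correct and follows essentially the same route as the paper: the paper sets $v=\max_k\tilde V^{n_0}(\db)_k<1$, uses $\d_k\le 1$ to get $\tilde V^{n_0+1}(\db)_k\le v^2$, and iterates to obtain $\tilde V^{n+n_0}(\db)_k\le v^{2^n}$, which is exactly your recursion $M_{n+1}\le M_n^2$ started at $n_0$. The only (cosmetic) difference is that you correctly allow $M_{n_0}=0$, whereas the paper asserts $0<v<1$.
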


\begin{proof} Let us denote $v=\max\{V^{n_0}(\db)_1,V^{n_0}(\db)_k,V^{n_0}(\db)_3\}$, then due to the assumption
one has $0<v<1$. Then from \eqref{nV} with \eqref{bb2} one gets
$$
\tilde V^{n_0+1}(\db)_k=\sum_{i,j=1}^3|b_{ij,k}|V^{n_0}(\db)_i
V^{n_0}(\db)_j\leq v^2\d_k\leq v^2 .
$$
Iterating this procedure we obtain $\tilde V^{n+n_0}(\db)_k\leq
v^{2^n}$ for every $n\in\bn$, $k=1,2,3$. This yields the assertion.
\end{proof}

Now we are ready to formulate a main result about stability of the
unique fixed point $(0,0,0)$ for $V$.

\begin{thm}\label{bb-main} Assume that \eqref{bb2} is satisfied.
 If there is  $k_0\in\{1,2,3\}$ such that $\d_{k_0}<1$
and for each $k=1,2,3$  one can find $i_0\in\{1,2,3\}$ with
$|b_{i_0,k_0,k}|+|b_{k_0,i_0,k}|\neq 0$, then $(0,0,0)$ is a unique
stable fixed point, i.e. for every $\fb\in S$ one has $V^n(\fb)\to
(0,0,0)$ as $n\to \infty$.
\end{thm}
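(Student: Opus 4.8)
The plan is to split $S$ according to the value of $\gamma_\fb=\max\{|f_1|,|f_2|,|f_3|\}$ and to handle the few troublesome boundary points separately. First I would observe that \eqref{bb2} is exactly the hypothesis of Lemma \ref{bb1}, so the sequence $\{\tilde V^n(\db)\}$ is bounded; Proposition \ref{g-f-V} then yields $V^n(\fb)\to(0,0,0)$ for \emph{every} $\fb\in S$ with $\gamma_\fb<1$. This reduces the whole problem to the points of $S$ at which $\gamma_\fb=1$. Since $\fb\in S$ means $\|\fb\|\le1$ and $\gamma_\fb\le\|\fb\|$, the equality $\gamma_\fb=1$ forces $\|\fb\|=1$ with a single nonzero coordinate, so the only such points are $\pm\mathbf{e}_1,\pm\mathbf{e}_2,\pm\mathbf{e}_3$. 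Write $B$ for this six-point set.

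The core of the argument is to show that, under the two structural hypotheses, the $V$-orbit of any point of $B$ must leave $B$ within two steps. Using \eqref{V} one computes directly $V(\pm\mathbf{e}_i)_k=b_{ii,k}$. Suppose, for contradiction, that some $\fb=\epsilon_0\mathbf{e}_{i_0}\in B$ satisfies $V(\fb)=\epsilon_1\mathbf{e}_{i_1}\in B$ and $V^2(\fb)=\epsilon_2\mathbf{e}_{i_2}\in B$ with signs $\epsilon_s\in\{\pm1\}$. From the first transition, $b_{i_0i_0,i_1}=\epsilon_1$ and $b_{i_0i_0,k}=0$ for $k\ne i_1$; since the single term $|b_{i_0i_0,i_1}|=1$ already exhausts the bound $\delta_{i_1}=\sum_{a,b}|b_{ab,i_1}|\le1$ coming from \eqref{bb2}, I get $\delta_{i_1}=1$ and $b_{ab,i_1}=0$ for all $(a,b)\ne(i_0,i_0)$.

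Now I would feed the index $k=i_1$ into the coupling hypothesis: there is $p$ with $|b_{p,k_0,i_1}|+|b_{k_0,p,i_1}|\ne0$. But the only surviving coefficient with third index $i_1$ sits at $(a,b)=(i_0,i_0)$, so either $(p,k_0)=(i_0,i_0)$ or $(k_0,p)=(i_0,i_0)$; in both cases $k_0=i_0$. Repeating the identical reasoning for the transition $V(\fb)\to V^2(\fb)$ together with the coupling hypothesis at $k=i_2$ gives $k_0=i_1$. Hence $i_0=i_1=k_0$, and the first transition now reads $V(\epsilon_0\mathbf{e}_{k_0})=\epsilon_1\mathbf{e}_{k_0}$, i.e. $|b_{k_0k_0,k_0}|=1$, which forces $\delta_{k_0}\ge1$ and contradicts the standing assumption $\delta_{k_0}<1$.

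Consequently, for each $\fb\in B$ there is $m\in\{1,2\}$ with $V^m(\fb)\notin B$, that is $\gamma_{V^m(\fb)}<1$. Applying the first paragraph to $V^m(\fb)\in S$ gives $V^{n}(\fb)=V^{n-m}\big(V^m(\fb)\big)\to(0,0,0)$, so the convergence holds on all of $S$; uniqueness of the fixed point is then immediate, since any fixed point $\pb$ satisfies $\pb=V^n(\pb)\to(0,0,0)$. I expect the genuine obstacle to be precisely the boundary set $B$, where the estimate \eqref{Vn} underlying Proposition \ref{g-f-V} degenerates because $\gamma_\fb^{2^n}\equiv1$; the delicate point is the index bookkeeping showing that $\delta_{k_0}<1$ together with the coupling condition rules out any orbit trapped in $B$, and that two steps always suffice.
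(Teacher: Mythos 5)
Your route is genuinely different from the paper's. The paper does not split off the boundary of $S$ at all: its proof shows that the coupling hypothesis together with $\d_{k_0}<1$ forces the \emph{strict} inequality $\tilde V(\db)_k<\d_k\le 1$ for every $k$, then invokes Lemma \ref{bb33} to get $\tilde V^n(\db)\to(0,0,0)$, and finally uses the majorant estimate \eqref{Vn}, $|V^n(\fb)_k|\le\g_\fb^{2^n}\,\tilde V^{n-1}(\db)_k$, which only needs $\g_\fb\le 1$ and therefore covers the extreme points in the same stroke. You instead use only boundedness of $\{\tilde V^n(\db)\}$ (Lemma \ref{bb1}) plus Proposition \ref{g-f-V} on the set $\{\g_\fb<1\}$, and handle $B=\{\pm\mathbf{e}_1,\pm\mathbf{e}_2,\pm\mathbf{e}_3\}$ by a combinatorial escape argument. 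That two-step analysis is correct and rather elegant: $V(\pm\mathbf{e}_{i_0})_k=b_{i_0i_0,k}$, a transition inside $B$ saturates $\d_{i_1}=1$ and annihilates every other coefficient with third index $i_1$, the coupling hypothesis then pins $k_0=i_0$, and two consecutive transitions force $|b_{k_0k_0,k_0}|=1$, contradicting $\d_{k_0}<1$.

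There is, however, one genuine soft spot. Your reduction rests on the equivalence ``$V^m(\fb)\notin B$ iff $\g_{V^m(\fb)}<1$'', which is valid only for points of $S$, and under \eqref{bb2} alone the orbit need not stay in $S$. Concretely, take $b_{11,1}=0.1$, $b_{11,2}=1$, $b_{11,3}=0.5$ and all other coefficients zero: then \eqref{bb2} holds and $k_0=1$ satisfies both structural hypotheses ($\d_1=0.1<1$ and $b_{11,k}\neq 0$ for each $k$), yet $V(\mathbf{e}_1)=(0.1,\,1,\,0.5)$ has $\g=1$ while lying outside both $S$ and $B$, so neither branch of your dichotomy applies: Proposition \ref{g-f-V} is stated for $\g_\fb<1$, and your second-step bookkeeping assumed $V(\fb)=\pm\mathbf{e}_{i_1}$ exactly. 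If one adds the standing assumption that $V$ maps $S$ into itself (which the paper tacitly makes when writing $V:S\to S$), your proof is complete as written; without it you must either extend the contradiction argument to the case $\g_{V(\fb)}=1$ with $V(\fb)\notin B$, or fall back on the paper's uniform estimate, which sidesteps the issue entirely.
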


\begin{proof} Take any $k\in \{1,2,3\}$, then due to condition one can find $i_0$ such
that $|b_{i_0,k_0,k}|+|b_{k_0,i_0,k}|\neq 0$. Then from \eqref{nV}
with \eqref{bb2} we have
\begin{eqnarray*}
\tilde V(\db)_k&=& \sum_{i,j=1}^3|b_{ij,k}|\d_j\d_j\\
&=&
\sum_{j=1}^3|b_{k_0j,k}|\d_{k_0}\d_j+\sum_{i=1}^3|b_{ik_0,k}|\d_{i}\d_{k_0}+
\sum_{i,j=1\atop i,j\neq k_0}^3|b_{ij,k}|\d_{i}\d_j-|b_{k_0k_0,k}|\d_{k_0}^2\\
&\leq&
\sum_{j=1}^3|b_{k_0j,k}|\d_{k_0}+\sum_{i=1}^3|b_{ik_0,k}|\d_{k_0}+
\sum_{i,j=1\atop i,j\neq k_0}^3|b_{ij,k}|-|b_{k_0k_0,k}|\d_{k_0}^2\\
&=&\d_k-(1-\d_{k_0})\sum_{j=1}^3(|b_{k_0j,k}|+|b_{jk_0,k}|)+|b_{k_0k_0,k}|(1-\d_{k_0}^2)\\
&=&\d_k-(1-\d_{k_0})\bigg(\sum_{j=1}^3(|b_{k_0j,k}|+|b_{jk_0,k}|)-(1+\d_{k_0})|b_{k_0k_0,k}|\bigg)\\
&=&\d_k-(1-\d_{k_0})\bigg(\sum_{j=1\atop j\neq k_0}^3(|b_{k_0j,k}|+|b_{jk_0,k}|)+(1-\d_{k_0})|b_{k_0k_0,k}|\bigg)\\
&=&\d_k-(1-\d_{k_0})\sum_{j=1\atop j\neq k_0}^3(|b_{k_0j,k}|+|b_{jk_0,k}|)-(1-\d_{k_0})^2|b_{k_0k_0,k}|\\
&<&\d_k\leq 1,
\end{eqnarray*}
hence from Lemma \ref{bb33} we find that $\tilde V^n(\db)\to
(0,0,0)$ as $n\to \infty$. So, from \eqref{Vn} one gets the desired
assertion.
\end{proof}

\subsection{Diagonal case} In this subsection we are going to
investigate more concrete case called diagonal operators.

We call a quadratic operator $V$ given by \eqref{V} is
{\it diagonal} if $b_{ij,k}=0$ for all $i,j$ with $i\neq j$. In what
follows, for the sake of shortness we write $b_{ik}$ instead of
$b_{ii,k}$. Hence from \eqref{V} we derive
\begin{equation}\label{V2}
(V(\fb))_k=\sum_{i=1}^3b_{ik}f_i^2, \ \ \fb=(f_1,f_2,f_3)\in S.
\end{equation}

First we are interested when $V$ maps $S$ into itself, i.e. $V(S)\subset S$. If the coefficients $\{b_{ik}\}$ satisfy
\eqref{D*1} then from Proposition \ref{D*} we conclude the desired inclusion. Next lemma provides us a sufficient condition to $\{b_{ik}\}$ for the satisfaction of \eqref{D*1}.

\begin{lem}\label{d*11} Let $V$ be a diagonal quadratic operator given by \eqref{V2}. Assume that one holds
\begin{equation}\label{bb3}
\sum_{k=1}^3\max_i\{|b_{ik}|^2\}\leq 1,
\end{equation}
then \eqref{D*1} is satisfied.
\end{lem}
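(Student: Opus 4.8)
The plan is to exploit the diagonality of $V$ to collapse the double sum appearing in \eqref{D*1} into a single sum, and then to estimate that sum by combining the triangle inequality with the Cauchy--Schwarz inequality, matching the bound in \eqref{bb3} term by term.

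First I would use the defining property of a diagonal operator, namely $b_{ij,k}=0$ whenever $i\neq j$, to rewrite the inner sum in \eqref{D*1}. With the shorthand $b_{ik}=b_{ii,k}$ this gives
$$
\sum_{i,j=1}^3 b_{ij,k}f_ip_j=\sum_{i=1}^3 b_{ik}f_ip_i,
$$
so that the inequality to be established reduces to $\sum_{k=1}^3\big|\sum_{i=1}^3 b_{ik}f_ip_i\big|^2\leq 1$ for all $\fb,\pb\in S$.

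Next, for each fixed $k$ I would separate the coefficient part from the vector part by pulling out the largest coefficient:
$$
\bigg|\sum_{i=1}^3 b_{ik}f_ip_i\bigg|\leq \sum_{i=1}^3|b_{ik}|\,|f_i|\,|p_i|\leq \Big(\max_i|b_{ik}|\Big)\sum_{i=1}^3|f_i|\,|p_i|.
$$
The Cauchy--Schwarz inequality then yields $\sum_{i=1}^3|f_i|\,|p_i|\leq \|\fb\|\,\|\pb\|\leq 1$, since $\fb,\pb\in S$ means $\|\fb\|\leq 1$ and $\|\pb\|\leq 1$. Consequently $\big|\sum_{i=1}^3 b_{ik}f_ip_i\big|^2\leq \max_i|b_{ik}|^2$.

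Finally, summing over $k=1,2,3$ and invoking the hypothesis \eqref{bb3} gives
$$
\sum_{k=1}^3\bigg|\sum_{i=1}^3 b_{ik}f_ip_i\bigg|^2\leq \sum_{k=1}^3\max_i|b_{ik}|^2\leq 1,
$$
which is precisely \eqref{D*1}. I do not anticipate any real obstacle here; the single point that requires care is to factor out the maximal coefficient \emph{before} applying Cauchy--Schwarz, so that the vector estimate contributes only the harmless factor $\|\fb\|\,\|\pb\|\leq 1$ and what remains is exactly the quantity $\max_i|b_{ik}|^2$ that is controlled by the assumption \eqref{bb3}.
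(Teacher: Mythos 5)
Your proposal is correct and follows essentially the same route as the paper: collapse the double sum using diagonality, factor out $\max_i|b_{ik}|$, apply Cauchy--Schwarz to get $\sum_i|f_i||p_i|\leq\|\fb\|\|\pb\|\leq 1$, and sum over $k$ against the hypothesis \eqref{bb3}. No gaps.
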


\begin{proof} Let us check \eqref{D*1}. Take any $\fb,\pb\in S$,
then taking into account the definition of diagonal operator and our
notation we get
\begin{eqnarray*}\label{b1}
\bigg|\sum_{i,j=1}^3b_{ij,k}f_ip_j\bigg|&\leq&
\sum_{i=1}^3|b_{ik}||f_i||p_i|\nonumber\\
&\leq&\max_i\{|b_{ik}|\}\sum_{i=1}^3|f_i||p_i|\nonumber\\
&\leq&\max_i\{|b_{ik}|\}\|\fb\|\|\pb\|\nonumber\\
&\leq& \max_i\{|b_{ik}|\},
\end{eqnarray*}
which implies the desired inequality.
\end{proof}

\begin{rem} It is easy to see that the condition \eqref{bb3} is weaker than \eqref{D*3}.
\end{rem}

\begin{thm} Let $V$ be a diagonal quadratic operator given by \eqref{V2}.   Assume that
\begin{eqnarray}\label{bb4}
\sum_{k=1}^3\max_i\{|b_{i,k}|^2\}<1,
\end{eqnarray}
then the operator has a unique stable fixed point $(0,0,0)$.\\
\end{thm}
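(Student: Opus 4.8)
The plan is to show that on $S$ the map $V$ contracts the Euclidean norm by a fixed factor strictly below one; global attraction of $(0,0,0)$ together with its uniqueness then follow immediately. Write $C=\sum_{k=1}^3\max_i|b_{ik}|^2$, so that the hypothesis \eqref{bb4} reads $C<1$. First I would record that \eqref{bb4} implies \eqref{bb3}, whence by Lemma \ref{d*11} together with Proposition \ref{D*} the operator $V$ maps $S$ into itself, so that the iterates $V^n(\fb)$ are well defined for every $\fb\in S$; in fact the norm estimate below re-proves this inclusion.

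Second, I would derive the key pointwise bound. For $\fb\in S$ and $k\in\{1,2,3\}$, from \eqref{V2} and $\sum_i f_i^2=\|\fb\|^2$ one gets $|V(\fb)_k|\leq\sum_{i=1}^3|b_{ik}|f_i^2\leq\max_i|b_{ik}|\,\|\fb\|^2$. Squaring and summing over $k$ yields $\|V(\fb)\|^2\leq C\|\fb\|^4$. Since $\|\fb\|\leq 1$ on $S$, we have $\|\fb\|^4\leq\|\fb\|^2$, and therefore $\|V(\fb)\|\leq\sqrt{C}\,\|\fb\|$ with $\sqrt{C}<1$.

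Third, I would iterate. Because $V$ maps $S$ into $S$, the bound applies repeatedly and gives $\|V^n(\fb)\|\leq C^{\,n/2}\|\fb\|$ for all $n\in\bn$ and all $\fb\in S$. As $\sqrt{C}<1$, this forces $V^n(\fb)\to(0,0,0)$ for every $\fb\in S$, so $(0,0,0)$ is globally attracting. Uniqueness is then automatic: if $\pb\in S$ satisfies $V(\pb)=\pb$, then $\pb=V^n(\pb)\to(0,0,0)$, hence $\pb=(0,0,0)$.

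There is no serious obstacle here; the statement reduces to a clean contraction estimate. The only point worth stressing is that, in contrast with the general setting, condition \eqref{bb4} need not imply the normalization \eqref{bb2} (for instance when several $|b_{ik}|$ are comparable for a fixed $k$), so Theorem \ref{bb-main} does not apply directly. What has to be exploited is the quadratic gain $\|\fb\|^4\leq\|\fb\|^2$ coming from the homogeneity of degree two of $V$ on the unit ball, which is precisely what turns the bound $\|V(\fb)\|^2\leq C\|\fb\|^4$ into a genuine contraction and closes the argument.
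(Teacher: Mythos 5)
Your proof is correct and follows essentially the same route as the paper's: both rest on the pointwise bound $|V(\fb)_k|\leq \max_i|b_{ik}|\,\|\fb\|^2$ and iterate it, the paper tracking the componentwise estimates $|V^n(\fb)_k|\leq a_k\gamma^{n-1}$ with $\gamma=\sum_k a_k^2$ while you package the same inequality as the norm contraction $\|V(\fb)\|\leq\sqrt{C}\,\|\fb\|$ (with $C=\gamma$), which yields the identical decay rate. Your closing remark that \eqref{bb4} does not imply \eqref{bb2} (so Theorem \ref{bb-main} is not directly applicable) is accurate and a worthwhile observation, but the argument itself is the paper's.
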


\begin{proof} First, from \eqref{bb4} with  Lemma \ref{d*11} we conclude that $V$ maps $S$ into itself.
 Now denote
$a_k:=\max\limits_i\{|b_{i,k}|\}$ and put
\begin{eqnarray}\label{b3}
\g:=\sum_{k=1}^3 a_k^2.
\end{eqnarray}

Take any $\fb=(f_1,f_2,f_3)\in S$. Then from \eqref{V2} we find
$$
|V(\fb)_k|\leq \sum_{i=1}^3|b_{ik}|f_i^2\leq a_k\sum_{i=1}^3f_i^2\leq a_k, \ \ k=1,2,3.
$$
From the last inequality with \eqref{V2} implies
\begin{eqnarray*}\label{b4}
|V^2(\fb)_k|\leq a_k\g, \ \ \ k=1,2,3.
\end{eqnarray*}
Now iterating this procedure, we derive
\begin{eqnarray}\label{b5}
|V^n(\fb)_k|\leq a_k\g^{n-1}, \ \ \ k=1,2,3.
\end{eqnarray}
for every $n\geq 2$. Due to \eqref{bb4} we have  $\g<1$, therefore \eqref{b5} implies that $V^n(\fb)\to 0$ as
$n\to\infty$. Arbitrariness of $\fb$ proves the theorem.
\end{proof}

\begin{rem} Note that if \eqref{bb4} is not satisfied, then the corresponding quadratic operator
may have more than one fixed points. Indeed, let us consider the following diagonal operator
defined by $V_0(\fb)=(f_1^2,0,0)$, where $\fb=(f_1,f_2,f_3)$. One can see that for this operator \eqref{bb3} is satisfied, but
\eqref{bb4} does not hold. It is clear that $V_0$ has two fixed points such as $(1,0,0)$ and $(0,0,0)$. \end{rem}

\subsection{Example of diagonal quadratic operator which is
not KS one}  In this subsection we are going to provide an example of a diagonal operator for which \eqref{bb3} is not satisfied, but nevertheless it maps $S$ into itself. Moreover, we shall show to such an operator does not satisfy the KS property in certain values of the coefficients.

Let us consider the following diagonal quadratic
operator defined by
\begin{equation}\label{q1}
\left\{
\begin{array}{lll}
(V(\fb))_1=f_1^2,\\
(V(\fb))_2=af_2^2+bf_3^2,\\
(V(\fb))_3=cf_3^2,
\end{array}
\right. \ \ \fb=(f_1,f_2,f_3).
\end{equation}

We can immediately observe that for given operator \eqref{bb3} is
not satisfied since $b_{11}=1$ and if one of the coefficients $a,b,c$ is non zero.

\begin{lem}\label{abc} Let
\begin{equation}\label{bb5} \max\{a^2,b^2\}+c^2\leq1
\end{equation}
be satisfied. Then for the quadratic operator \eqref{q1} the condition
\eqref{D*1} is satisfied..
\end{lem}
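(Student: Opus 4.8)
The plan is to substitute the coefficients of \eqref{q1} into the left-hand side of \eqref{D*1} and then control the resulting expression by a short chain of elementary inequalities, the decisive one being a Cauchy--Schwarz reduction that trades the two ball constraints $\fb,\pb\in S$ for a single $\ell^1$-type constraint.

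First I would read off the nonzero coefficients of the diagonal operator \eqref{q1}: for $k=1$ only $b_{11}=1$; for $k=2$ one has $b_{22}=a$ and $b_{32}=b$; for $k=3$ only $b_{33}=c$. Abbreviating $r=f_1p_1$, $s=f_2p_2$, $t=f_3p_3$, the left-hand side of \eqref{D*1} becomes
\[
r^2+(as+bt)^2+c^2t^2 .
\]
Since the entries $f_i,p_i$ are real, Cauchy--Schwarz gives $|r|+|s|+|t|=\sum_{i=1}^3|f_i|\,|p_i|\le \|\fb\|\,\|\pb\|\le 1$. Hence it suffices to establish the estimate
\[
r^2+(as+bt)^2+c^2t^2\le 1
\]
for all real $r,s,t$ subject merely to $|r|+|s|+|t|\le 1$: proving the bound on this larger region certainly covers the triples coming from $\fb,\pb\in S$.

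Then I would run the following chain. Writing $m=\sqrt{\max\{a^2,b^2\}}$, the triangle inequality and $|a|,|b|\le m$ give $(as+bt)^2\le m^2(|s|+|t|)^2$. Because $|t|\le|s|+|t|$ one also has $c^2t^2\le c^2(|s|+|t|)^2$; adding these and invoking the hypothesis \eqref{bb5} in the form $m^2+c^2\le 1$ yields
\[
(as+bt)^2+c^2t^2\le (m^2+c^2)\,(|s|+|t|)^2\le (|s|+|t|)^2 .
\]
Finally, setting $w=|s|+|t|$ and using $|r|+w\le 1$, the estimate $r^2+w^2\le(|r|+w)^2\le 1$ closes the argument, so \eqref{D*1} holds.

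The step I expect to require the most care is the relaxation described in the second paragraph: one must observe that the only information about $\fb,\pb$ actually needed is the single inequality $|r|+|s|+|t|\le 1$, which replaces the constrained maximization over two unit balls by an estimate over an $\ell^1$-ball. The second delicate point is the absorption $c^2t^2\le c^2(|s|+|t|)^2$, which is precisely what makes the weaker hypothesis $\max\{a^2,b^2\}+c^2\le 1$ sufficient (rather than the stronger $a^2+b^2+c^2\le 1$); it exploits that the variable $t=f_3p_3$ is shared between the $k=2$ and $k=3$ rows of \eqref{q1}. Everything else is routine.
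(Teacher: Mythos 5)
Your proof is correct and follows essentially the same route as the paper's: both substitute the coefficients of \eqref{q1}, reduce via Cauchy--Schwarz to the single constraint $|f_1p_1|+|f_2p_2|+|f_3p_3|\le 1$, group $z=|f_2p_2|+|f_3p_3|$, and invoke $\max\{a^2,b^2\}+c^2\le 1$ to absorb the second and third terms. The only difference is cosmetic bookkeeping: the paper bounds $c^2|f_3p_3|^2$ by $c^2z$ and checks that the resulting expression $z\bigl(z(1+\max\{a^2,b^2\})+c^2-2\bigr)$ is nonpositive for $0\le z\le 1$, whereas you bound it by $c^2z^2$ and close with $r^2+z^2\le(|r|+z)^2\le 1$.
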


\begin{proof} Take any $\fb,\pb\in S$, and denote
$$
z=|f_2p_2|+|f_3p_3|.
$$
Then using $|f_1p_1|+|f_2p_2|+|f_3p_3|\leq1
$ we have
\begin{eqnarray}\label{abc2}
\sum\limits_{k=1}^3\bigg|
\sum\limits_{m,l=1}^3b_{ml,k}f_mp_l\bigg|^2-1&=&|f_1p_1|^2+|af_2p_2+bf_3p_3|^2+|cf_3p_3|^2-1\nonumber\\[2mm]
&\leq&|f_1p_1|^2+\max\{a^2,b^2\}(|f_2p_2|+|f_3p_3|)^2+c^2|f_3p_3|-1\nonumber\\[2mm]
&\leq&(f_1p_1)^2+\max\{a^2,b^2\}(|f_2p_2|+|f_3p_3|)^2+c^2(|f_2p_2|+|f_3p_3|)-1\nonumber\\[2mm]
&\leq&(1-|f_2p_2|-|f_3p_3|)^2+\max\{a^2,b^2\}(|f_2p_2|+|f_3p_3|)^2\nonumber\\
&&+c^2(|f_2p_2|+|f_3p_3|)-1\nonumber\\[2mm]
&\leq&(1-z)^2+\max\{a^2,b^2\}z^2+c^2z-1\nonumber\\
&=&z\big(z(1+\max\{a^2,b^2\})+c^2-2\big).
\end{eqnarray}

Due to $0\leq{z}\leq1$, we conclude that \eqref{abc2} is less than zero, if one has
$$
\max\{a^2,b^2\}+c^2-1\leq 0,
$$
which implies the assertion.
\end{proof}

The proved lemma implies that the operator \eqref{q1} maps $S$ into
itself. Therefore, let us examine dynamics of \eqref{q1} on $S$.

\begin{thm} Let $V$ be a quadratic operator given by \eqref{q1}, and assume \eqref{bb5} is satisfied. Then the
following assertions hold true:
\begin{enumerate}
\item[(i)] $(0,0,0)$, $(1,0,0)$ are fixed points of $V$;
\item[(ii)] if $|f_1|=1$, then $V^n(\fb)=(1,0,0)$ for all $n\in\bn$;
\item[(iii)] Let $|c|=1$, then there is another fixed point $(0,0,c)$. Moreover, if $|f_3|=1$, then $V^n(\fb)=(0,0,c)$ for every $n\geq 2$, and if $\max\{|f_1|,|f_3|\}<1$, then $V^n(\fb)\to (0,0,0)$ as $n\to \infty$;
\item[(iv)] Let $|a|=1$, then there is another fixed point $(0,a,0)$. If $|af_2^2+bf^2_3|=1$, then
$V^n(\fb)=(0,a,0)$ for all $n\geq 2$, and if $|af_2^2+bf^2_3|<1$ and $|f_1|<1$, then
$V^n(\fb)=(0,0,0)$ as $n\to\infty$;
\item[(v)] Let $|b|=1$, $|a|<1$. If $|f_1|<1$, then  $V^n(\fb)=(0,0,0)$ as $n\to\infty$;
\item[(vi)] Let $\max\{a^2,b^2\}+c^2<1$. If  $|f_1|<1$, then
$V^n(\fb)=(0,0,0)$ as $n\to\infty$.
\end{enumerate}
\end{thm}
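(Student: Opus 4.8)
The plan is to exploit the near-triangular structure of \eqref{q1}: the first coordinate evolves autonomously via $f_1\mapsto f_1^2$, the third via $f_3\mapsto cf_3^2$, and only the second coordinate, $f_2\mapsto af_2^2+bf_3^2$, is coupled (and only to $f_3$). Writing $V^n(\fb)=(f_1^{(n)},f_2^{(n)},f_3^{(n)})$, I would use throughout Lemma \ref{abc}, which gives $V(S)\subset S$, so every iterate lies in $S$ and hence each coordinate of $V^n(\fb)$ has modulus at most $1$. I would also record at the outset the arithmetic forced by \eqref{bb5} in the boundary cases: $|c|=1$ forces $a=b=0$; $|a|=1$ forces $c=0$ and $|b|\le 1$; and $|b|=1$ with $|a|<1$ forces $c=0$. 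These reductions collapse the coupling in most of the cases.

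Parts (i) and (ii) are immediate: substituting $(0,0,0)$ and $(1,0,0)$ into \eqref{q1} shows they are fixed, and for (ii) the constraint $\fb\in S$ with $|f_1|=1$ forces $f_2=f_3=0$, so one application of \eqref{q1} gives $(1,0,0)$, which is then fixed. For (iii) the reduction $a=b=0$ decouples the map entirely into $f_1\mapsto f_1^2$ and $f_3\mapsto cf_3^2$, with the middle coordinate killed after one step. I would check $V(0,0,c)=(0,0,c)$ from $c^3=c$ (valid since $c^2=1$), treat $|f_3|=1$ by direct substitution, and for $\max\{|f_1|,|f_3|\}<1$ note $|f_1^{(n)}|=|f_1|^{2^n}\to0$ and $|f_3^{(n)}|=|f_3|^{2^n}\to0$.

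For (iv) and (v) the key point is that $c=0$ annihilates the third coordinate after the first step, so for $n\ge2$ the middle recursion reduces to $f_2^{(n)}=a(f_2^{(n-1)})^2$. In (iv), where $|a|=1$: if $|af_2^2+bf_3^2|=1$, then the middle coordinate of $V(\fb)$ has modulus $1$, which by $V(\fb)\in S$ forces its other coordinates to vanish, in particular $f_1=0$; one further step gives $(0,ag_2^2,0)=(0,a,0)$ since the middle coordinate squares to $1$, and this is fixed. If instead $|af_2^2+bf_3^2|<1$ and $|f_1|<1$, then $|f_2^{(n)}|=|f_2^{(1)}|^{2^{n-1}}\to0$ while $|f_1^{(n)}|\to0$. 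In (v), where $|b|=1$ and $|a|<1$, the factor $|a|<1$ makes the reduced recursion a genuine contraction: $|f_2^{(2)}|\le|a|<1$, and thereafter $|f_2^{(n)}|\le|f_2^{(n-1)}|^2$, so $f_2^{(n)}\to0$, again together with $f_1^{(n)}\to0$.

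The main obstacle is (vi), the one case with genuine coupling, where $|a|,|b|,|c|<1$ but no coordinate fully decouples. Here I would first handle the third coordinate in isolation: $|f_3^{(n)}|=|c|\,|f_3^{(n-1)}|^2$ with $|c|<1$ forces $|f_3^{(n)}|\to0$ (in fact super-exponentially). I would then read the middle recursion $f_2^{(n)}=a(f_2^{(n-1)})^2+b(f_3^{(n-1)})^2$ as a contraction perturbed by a vanishing inhomogeneity: using the a priori bound $|f_2^{(n-1)}|\le1$ to replace the square by its first power yields $|f_2^{(n)}|\le|a|\,|f_2^{(n-1)}|+|b|\,|f_3^{(n-1)}|^2$, a linear recursion with contraction factor $|a|<1$ and forcing term tending to $0$. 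The standard fact that such a recursion drives $|f_2^{(n)}|\to0$ (the geometric convolution of a null sequence is null), together with $f_1^{(n)}\to0$, then completes the proof. The delicate point to get right is exactly this passage from the quadratic inhomogeneous recursion to the linear contractive estimate, which hinges on the uniform bound $|f_2^{(n-1)}|\le1$ supplied by Lemma \ref{abc}.
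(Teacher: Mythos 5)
Your proposal is correct, and for parts (i)--(v) it follows essentially the same route as the paper: use \eqref{bb5} to force $a=b=0$ (resp.\ $c=0$) in the boundary cases, observe that the first and third coordinates iterate autonomously as $f_1\mapsto f_1^2$ and $f_3\mapsto cf_3^2$, and reduce the middle coordinate to the one--dimensional recursion $f_2^{(n)}=a(f_2^{(n-1)})^2$ once $c=0$ kills the third coordinate (the paper records this as the closed formula \eqref{v-222}). Incidentally, in (v) your argument is slightly more careful than the paper's, which asserts $|af_2^2+bf_3^2|<1$ for \emph{every} $\fb\in S$ --- false at $|f_3|=1$, where it equals $|b|=1$ --- although the conclusion survives because $|f_2^{(2)}|\le |a|<1$ regardless, exactly as you argue. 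Where you genuinely diverge is (vi): the paper sets $m=\max\{|a|,|b|\}$, $\kappa=m^2+|c|^2<1$ and proves by induction the explicit doubly--exponential bounds $|(V^n(\fb))_2|\le m\,\kappa^{2^{n-1}-1}$ and $|(V^n(\fb))_3|\le |c|^{2^{n+1}-1}$, whereas you first dispose of $f_3^{(n)}\to 0$ and then linearize the middle recursion via the a priori bound $|f_2^{(n-1)}|\le 1$ (available since $|f_2^{(1)}|\le\max\{|a|,|b|\}<1$, or from Lemma \ref{abc}) to get $|f_2^{(n)}|\le |a|\,|f_2^{(n-1)}|+|b|\,|f_3^{(n-1)}|^2$, a contraction with vanishing forcing term. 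Your version is shorter and avoids guessing the right induction hypothesis, at the cost of yielding only a geometric rather than a doubly--exponential rate of convergence; both are complete proofs of the stated assertion.
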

\begin{proof} The statements (i) and (ii) are obvious. Hence, furthermore, we assume $|f_1|<1$. Now let us consider (iii). If $|c|=1$, then from \eqref{bb5} one gets that
$a=b=0$. Hence, in this case, we have another fixed point $(0,0,c)$.
One can see that $V(0,0,-c)=(0,0,c)$. So, if $|f_3|=1$, then
$V^n(\fb)=(0,0,c)$ for every $n\geq 2$. If $\max\{|f_1|,|f_3|\}<1$,
then from \eqref{q1} we find $V^n(\fb)=(f_1^{2^n},0,c
f_3^{2^n})\to(0,0,0)$ as $n\to\infty$.

(iv). Let $|a|=1$, then from \eqref{bb5} one
finds $c=0$, which implies the existence of another fixed point $(0,a,0)$. From \eqref{q1} we find
\begin{equation}\label{v-222}
(V^n(\fb))_2=a(af_2^2+bf^2_3)^{2^{n-1}}.
\end{equation}
Hence, if $|af_2^2+bf^2_3|=1$ then
$V^n(\fb)=(0,a,0)$ for every $n\geq 2$. If $|af_2^2+bf^2_3|<1$, $|f_1|<1$ then
$V^n(\fb)\to(0,0,0)$ as $n\to\infty$.

(v).  Let $|b|=1$, $|a|<1$, then we have $c=0$. In this case, one has $|af_2^2+bf^2_3|<1$ for every $\fb\in S$, therefore, \eqref{v-222} yields the desired assertion.

(vi). Let us assume that $\max\{a^2,b^2\}+c^2<1$. Then
modulus of all the coefficients are strictly less than one. For the
sake of simplicity denote $m=\max\{|a|,|b|\}$. From \eqref{q1} we
have
\begin{equation}\label{q112}
\left\{
\begin{array}{ll}
|(V(\fb))_2|\leq m, \\
|(V(\fb))_3|\leq |c|,
\end{array}
\right.
\end{equation}
for every $\fb\in S$.

Then denoting $\k=m^2+|c|^2$,  from \eqref{q1} with \eqref{q112} one gets
\begin{equation}\label{q12}
\left\{
\begin{array}{ll}
|(V^2(\fb))_2|\leq m\k\\
|(V^2(\fb))_3|\leq |c|^3
\end{array}
\right.
\end{equation}
Assume that
\begin{equation}\label{q1m}
\left\{
\begin{array}{ll}
|(V^m(\fb))_2|\leq m\k^{2^{m-1}-1}\\
|(V^m(\fb))_3|\leq |c|^{2^{m+1}-1}
\end{array}
\right.
\end{equation}
for some $m\geq 2$. Then from \eqref{q1} with \eqref{q1m} we derive
\begin{eqnarray*}
|(V^{m+1}(\fb))_2|&\leq& m\big(m^2\k^{2^m-2}+|c|^{2^{m+2}-2}\big)\\
&=& m\big(m^2\k^{2^m-2}+(|c|^2)^{2^{m}-2}|c|^{2^{m+1}+2}\big),\\
&\leq& m\big(m^2\k^{2^m-2}+\k^{2^{m}-2}|c|^2\big),\\
&=& m \k^{2^m-1},
\end{eqnarray*}
here we have used $|c^2|\leq \k$.

One can see that
$$|(V^{m+1}(\fb))_3|\leq |c|^{2^{m+2}-1}.
$$
Consequently, by the induction we conclude that \eqref{q1m} is valid for all $m\geq 2$.

According to our assumption  one has $\k<1$, therefore, \eqref{q1m} with \eqref{q1} implies that
$V^n(\fb)\to(0,0,0)$ ($n\to\infty$) when $|f_1|<1$.
\end{proof}

By $\D_{a,b,c}$ we denote a linear operator from $M_2(\bc)$ to $M_2(\bc)\otimes M_2(\bc)$ corresponding to \eqref{q1}.
Now we would like to choose parameters $a,b,c$ so that $\D_{a,b,c}$ is not KS-operator.

\begin{thm} Assume that \eqref{bb5} is satisfied. If $|a|+|b|>1$, then
$\D_{a,b,c}$ is not KS-operator.
\end{thm}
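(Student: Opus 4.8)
The plan is to use the Kadison-Schwarz characterization obtained in Corollary \ref{ksf} and exhibit a single witness vector $\wb \in \bc^3$ for which the necessary inequality \eqref{ksf2} (or \eqref{ksf1}) fails. Since $\D_{a,b,c}$ is the diagonal operator corresponding to \eqref{q1}, its only nonzero coefficients are $b_{11,1}=1$, $b_{22,2}=a$, $b_{33,2}=b$, $b_{33,3}=c$; that is, writing the coefficients in the notation $b_{ml,k}$, the vectors $\bx_{ml}=(b_{ml,1},b_{ml,2},b_{ml,3})$ are supported only on the diagonal indices $m=l$. First I would compute the relevant quantities $\xb_m=(x_{m1},x_{m2},x_{m3})$, the scalars $\a_{ml}$, the commutator terms $\g_{ml}$ and $[\xb_m,\overline{\xb}_m]$, and the vector $\textbf{h}(\wb)$ explicitly for this diagonal operator. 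Because the coefficient array is diagonal, $x_{ml}=\langle \bx_{ml},\wb\rangle$ vanishes unless $m=l$, so $\xb_1=(w_1,0,0)$, $\xb_2=(0,aw_2,0)$ and $\xb_3=(0,bw_2+\text{(contribution in }w_3),cw_3)$ — here I would be careful to read off exactly which index $k$ each coefficient feeds into from \eqref{q1}, since $a$ and $b$ both land in the second output slot. This bookkeeping is what makes the commutator terms $[\xb_m,\overline{\xb}_m]$ and the cross terms $\g_{2,3}$ nonzero and is where the hypothesis $|a|+|b|>1$ must enter.

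The key idea is to pick a convenient test vector that isolates the obstruction coming from $a$ and $b$ simultaneously. I would try a vector of the form $\wb=(0,w_2,w_3)$ with $w_2,w_3$ chosen (possibly complex, to activate the commutator $[\wb,\overline{\wb}]$ and hence $\textbf{h}(\wb)$) so that the left-hand norm in \eqref{ksf2} becomes proportional to $|a|+|b|$ while the right-hand side $\|\wb\|^2-i\a_{2,3}-\sum_m\|\xb_m\|^2$ stays controlled by $\max\{a^2,b^2\}+c^2\le 1$. The point of the hypothesis $|a|+|b|>1$ is precisely that $|a|+|b|$ can exceed the quantity $1\ge \max\{a^2,b^2\}+c^2$ that bounds the admissible right-hand side, so for a suitable scaling of $w_2,w_3$ the norm inequality \eqref{ksf2} is violated. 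Concretely I would aim to reduce the whole criterion, after substituting the diagonal coefficients, to a scalar inequality in the single real parameter $|w_2|^2$ (or the ratio $|w_2|/|w_3|$) and show that the required bound fails for an appropriate choice.

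The main obstacle I anticipate is the bookkeeping in \eqref{ksf2}: the term $\textbf{h}(\wb)$ involves $[\wb,\overline{\wb}]$ (the imaginary/commutator part of $x^*x$), and the cross terms $\g_{2,3}=[\xb_2,\overline{\xb_3}]+[\overline{\xb_2},\xb_3]$ couple the $a$-slot and the $b$-slot, so I must verify that the contributions genuinely add rather than cancel. The delicate step is choosing the phases of $w_2,w_3$ so that the $a$ and $b$ contributions to the left-hand side of \eqref{ksf2} reinforce to give $|a|+|b|$ rather than $|a\pm b|$ or $\sqrt{a^2+b^2}$; only the additive combination $|a|+|b|$ beats the hypothesis threshold. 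Once the correct test vector is in hand, the remaining work is a direct substitution into Corollary \ref{ksf} and an elementary estimate showing \eqref{ksf2} fails, contradicting the assumption that $\D_{a,b,c}$ is Kadison-Schwarz.
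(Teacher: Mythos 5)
Your plan is essentially the paper's own proof: compute the quantities of Corollary \ref{ksf} for the diagonal coefficients $b_{11,1}=1$, $b_{22,2}=a$, $b_{33,2}=b$, $b_{33,3}=c$ and exhibit a $\wb$ violating \eqref{ksf2}. Two points where your sketch should be tightened. First, the bookkeeping: since $\xb_m=(x_{m1},x_{m2},x_{m3})$ with $x_{ml}=\langle\bx_{ml},\wb\rangle$, the nonzero entries sit on the diagonal slots, so $\xb_3=(0,0,b\overline{w}_2+c\overline{w}_3)$ --- the $b$-contribution lands in the \emph{third} component of $\xb_3$, not the second as you wrote; it is exactly this (that $\xb_2$ and $\xb_3$ point along different coordinate axes) that makes the cross term $\g_{2,3}=[\xb_2,\overline{\xb_3}]+[\overline{\xb_2},\xb_3]=(2ab|w_2|^2+ac(\overline{w}_2w_3+w_2\overline{w}_3),0,0)$ nonzero. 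Second, the anticipated difficulties about phase tuning and activating $[\wb,\overline{\wb}]$ do not arise: the paper simply sets $w_3=0$ (so $\textbf{h}(\wb)=0$ and $\a_{2,3}=0$, $\sum_m[\xb_m,\overline{\xb}_m]=0$), and \eqref{ksf2} collapses to $2|ab|\,|w_2|^2\le (1-|a|^2-|b|^2)|w_2|^2$, which is equivalent to $(|a|+|b|)^2\le 1$; thus $|a|+|b|>1$ forces a violation. So the threshold $|a|+|b|$ enters not because the left side equals $|a|+|b|$, but because $2|ab|$ combines with $|a|^2+|b|^2$ on the right into a perfect square. With those corrections your argument is the paper's argument.
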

\begin{proof} It
is enough choose the numbers $a,b,c$ so that, for them the conditions of Corollary
\ref{ksf} are not satisfied. Let us start to look to \eqref{ksf1}. A
little calculations show that
\begin{eqnarray}\label{x123}
&&\xb_1=(\overline{w}_1,0,0),\xb_2=(0,a\overline{w}_2,0),
\xb_3=(0,0,b\overline{w}_2+c\overline{w}_3),
\end{eqnarray}
where $(w_1,w_2,w_3)\in\bc^3$. So, from \eqref{algam} we immediately find
\begin{eqnarray*}
&&\a_{2,3}=\langle\xb_2,\xb_3\rangle-\langle\xb_3,\xb_2\rangle=0.
\end{eqnarray*}
Hence, from the last equality with \eqref{x123} we infer that
\eqref{ksf1} is reduced to
\begin{eqnarray}\label{e12}
|a|^2|w_2|^2+|b\overline{w}_2+c\overline{w}_3|^2\leq|w_2|^2+|w_3|^2
\end{eqnarray}

Now let us estimate left hand side the expression of \eqref{e12}.
\begin{eqnarray*}
|a|^2|w_2|^2+|b\overline{w}_2+c\overline{w}_3|^2&\leq&|a|^2|w_2|^2+\bigg(|b||w_2|+|c||w_3|\bigg)^2\\
&\leq&|a|^2|w_2|^2+\max\{|b|^2,|c|^2\}\bigg(|w_2|+|w_3|\bigg)^2\\
&\leq&|a|^2|w_2|^2+2\max\{|b|^2,|c|^2\}\bigg(|w_2|^2+|w_3|^2\bigg)
\end{eqnarray*}

Hence, if one holds
\begin{eqnarray}\label{e13}
|a|^2|w_2|^2+2\max\{|b|^2,|c|^2\}\bigg(|w_2|^2+|w_3|^2\bigg)\leq
|w_2|^2+|w_3|^2
\end{eqnarray}
then surely \eqref{e12} is satisfied. Therefore, let us examine
\eqref{e13}. From \eqref{e13} one finds
\begin{eqnarray*}
&&\bigg(1-|a|^2-2\max\{|b|^2,|c|^2\}\bigg)|w_2|^2+\bigg(1-2\max\{|b|^2,|c|^2\}\bigg)|w_3|^2\geq0,
\end{eqnarray*}
which is satisfied if one has
\begin{eqnarray}\label{e14}
|a|^2+2\max\{|b|^2,|c|^2\}\leq1.
\end{eqnarray}

Now let us look to the condition \eqref{ksf2}. From \eqref{x123}
direct calculations shows us that
\begin{eqnarray}
\label{hw} \left\{
\begin{array}{lll}
\textbf{h}(\wb)=\big(\overline{w}_2w_3-\overline{w}_3w_2,0,0\big)\\[2mm]
\g_{2,3}=\big(2ab|w_2|^2+ac(\overline{w}_2w_3+w_2\overline{w}_3),0,0\big)\\[2mm]
\sum\limits_{m=1}^3[\xb_m,\overline{\xb}_m]=0\\
\end{array}
\right.
\end{eqnarray}

Therefore, the left hand side of \eqref{ksf2} can be written as follows
\begin{eqnarray*}
\bigg\|\textbf{h}(\wb)-i
\g_{2,3}+i\overset{3}{\underset{m=1}{\sum}}[\xb_{m},\overline{\xb}_{m}]\bigg\|
=\bigg|\overline{w}_2w_3(1-iac)-\overline{w}_3w_2(1+iac)-2iab|w_2|^2\bigg|.
\end{eqnarray*}

Hence, the last equality with \eqref{x123} reduces
\eqref{ksf2} to
\begin{eqnarray*}
\bigg|\overline{w}_2w_3(1-iac)-\overline{w}_3w_2(1+iac)-2iab|w_2|^2\bigg|\leq|w_2|^2+|w_3|^2-|a|^2|w_2|^2-|b\overline{w}_2+c\overline{w}_3|^2
\end{eqnarray*}
Letting $w_3=0$ in the last inequality, one gets
\begin{eqnarray*}
2|ab||w_2|^2\leq|w_2|^2(1-|a|^2-|b|^2)
\end{eqnarray*}
which is equivalent to
\begin{eqnarray}\label{e15}
|a|+|b|\leq1.
\end{eqnarray}

Consequently, if $|a|+|b|>1$, then \eqref{e15} is not satisfied, and this proves the desired assertion.
\end{proof}

Now lets us provide more concrete examples of the parameters. Take $c=0$, $a=b=1/\sqrt{3}$, then one can see
that \eqref{bb5}, \eqref{e14} are satisfied, but $|a|+|b|=2/\sqrt{3}>1$.

\section*{Acknowledgement} The first named author (F.M)
thanks the Scientific and Technological Research Council of Turkey
(TUBITAK) for support and Harran University for kind hospitality.
Moreover, he and the forth named author (A.A.)  also acknowledge
Research Endowment Grant B (EDW B 0905-303) of IIUM and the MOSTI
grant 01-01-08-SF0079. Finally, authors would like to thank to an anonymous referee whose useful
suggestions and comments improve the content of the paper.

\end{document}